\newcommand{\Z}{\mathbb{Z}}
\newcommand{\D}{\mathbb{D}}
\newcommand{\N}{\mathbb{N}} 
\newcommand{\R}{\mathbb{R}} 
\newcommand{\T}{\mathbb{T}} 
\newcommand{\C}{\mathbb{C}}
\newcommand{\1}{\mathbbm{1}}
\newcommand{\AC}{\mathcal{A}}
\newcommand{\DC}{\mathcal{D}}
\newcommand{\HC}{\mathcal{H}}
\newcommand{\IC}{\mathcal{I}}
\newcommand{\PC}{\mathcal{P}}
\newcommand{\SC}{\mathcal{S}}
\newcommand{\XC}{\mathcal{X}}
\newcommand{\rk}{\textnormal{rk}}
\newcommand{\overbar}[1]{\mkern 1.0mu\overline{\mkern-1.0mu#1\mkern-1.0mu}\mkern 1.0mu}
\newcommand*\conj[1]{\overbar{#1}}
\newtheorem{theorem}{Theorem}[section]
\newtheorem{lemma}[theorem]{Lemma}
\newtheorem{definition}[theorem]{Definition}
\newtheorem{proposition}[theorem]{Proposition}
\newtheorem{corollary}[theorem]{Corollary}
\newtheorem{remark}[theorem]{Remark}
\title{On the characterization of Triebel--Lizorkin type spaces of analytic functions}
\author{Eskil Rydhe}
\begin{document}
\maketitle

\begin{abstract}
	We consider different characterizations of Triebel--Lizorkin type spaces of analytic functions on the unit disc. Even though our results appear in the folklore, detailed descriptions are hard to find, and in fact we are unable to discuss the full range of parameters. Without additional effort we work with vector-valued analytic functions, and also consider a generalized scale of function spaces, including for example so-called $Q$-spaces. The primary aim of this note is to generalize, and clarify, a remarkable result by Cohn and Verbitsky, on factorization of Triebel--Lizorkin spaces. Their result remains valid for functions taking values in an arbitrary Banach space, provided that the vector-valuedness ``sits in the right factor''. On the other hand, if we impose vector-valuedness on the ``wrong'' factor, then the factorization fails even for separable Hilbert spaces. 
\end{abstract}

\section{Introduction}

\begin{definition}
	Let $X$, $X_1$ and $X_2$ be normed linear spaces of analytic functions on $\D$. If for any $f\in X$ there exists $f_1\in X_1$ and $f_2\in X_2$ such that $f=f_1f_2$ and
	\begin{equation*}
	\sup_{f\in X\setminus\left\{0\right\}}\inf_{f_1f_2=f}\frac{ \|f_1\|_{X_1} \|f_2\|_{X_2}}{ \|f\|_X}<\infty,
	\end{equation*}
	then we say that $X\subset X_1\cdot X_2$. If for any $f_1\in X_1$ and $f_2\in X_2$ it holds that $f_1f_2\in X$ and
	\begin{equation*}
	\sup_{f_1\in X_1\setminus\left\{0\right\},f_2\in X_2\setminus\left\{0\right\}}\frac{ \|f_1f_2\|_X}{ \|f_1\|_{X_1} \|f_2\|_{X_2}}<\infty,
	\end{equation*}
	then we say that $X_1\cdot X_2\subset X$. If $X\subset X_1\cdot X_2$ and  $X_1\cdot X_2\subset X$, then we say that  $X=X_1\cdot X_2$.
\end{definition}

Throughout this paper, we let $\XC$ and $\HC$ respectively denote a general Banach space and a separable Hilbert space, both complex. By $\AC\left(\XC\right)$ we denote the space of analytic $\XC$-valued functions on the open unit disc $\D$. For short, we write $\AC=\AC\left(\C\right)$. The same principle will apply to all function spaces discussed below. 

We let $\T$ denote be the unit circle in $\C$, and give it the parametrization $x\mapsto \zeta_x$, where $\zeta_x=e^{2\pi i x}$, $x\in\R$. For $p\in\left(0,\infty\right)$, we denote by $L^p\left(\T,\XC\right)$ the class of strongly measurable functions $f:\T\to \XC$ such that $ \|f\|_{L^p\left(\T,\XC\right)}^p=\int_{\zeta_x\in\T} \|f\left(\zeta_x\right)\|_\XC^p\, dx <\infty$, where we somewhat abusively write $dx$ to indicate Lebesgue integration with respect to $\zeta_x$. We will often identify $f\in L^1\left(\T,\XC\right)$ with its Poisson extension $\PC\left[f\right]:\D\to\XC$. Under this identification, the Fourier coefficients $\hat f\left(n\right) = \int_{\T}f\left(\zeta_x\right)\conj{\zeta_x^n}\, dx$ are the Taylor coefficients of $\PC\left[f\right]$. For short, we typically write $f$ in place of $\PC\left[f\right]$. We denote the $n$th Taylor coefficient of a general function $f\in\AC\left(\XC\right)$ by $\hat f\left(n\right)$, even though $f$ is not necessarily the Poisson extension of an integrable function.

We define the Hardy space $H^p\left(\XC\right)$ as the class of functions $f\in\AC\left(\XC\right)$ such that $ \|f\|_{H^p\left(\XC\right)}=\sup_{0<r<1} \|f_r\|_{L^p\left(\T,\XC\right)} <\infty$, where $f_r:w\mapsto f\left(rw\right)$. In the case where $\XC=\HC$, we have the so-called square function characterization of $H^p\left(\HC\right)$; in the language of Section \ref{Sec:Fspaces}, $H^p\left(\HC\right)=F_{p,2}^0\left(\HC\right)$. If $p\ge 1$, then $H^p\left(\HC\right)$ also coincides with the space of $f\in L^p\left(\T,\HC\right)$ such that $\hat f\left(n\right)=0$ for $n<0$.

We define the pairing $\left\langle f,\phi\right\rangle_{\AC\left(\HC\right)} = \sum\langle \hat f\left(n\right),\hat \phi\left(n\right)\rangle_\HC$, where $f,\phi\in\AC\left(\HC\right)$, and $f$ is polynomial. If $\phi\in H^1\left(\HC\right)$, then $\left\langle f,\phi\right\rangle_{\AC\left(\HC\right)} = \int_{\T} \left\langle f, \phi \right\rangle_\HC\, dx$. With respect to this pairing, the dual of $H^1\left(\HC\right)$ is given by $BMOA\left(\HC\right)$, the space of $\phi \in H^1\left(\HC\right)$ such that
\begin{equation*}
 \|\phi\|_{BMOA\left(\HC\right)}= \|\phi\left(0\right)\|_\HC+\sup_{\textnormal{arcs }I\subset\T}\frac{1}{|I|}\int_I \|\phi\left(\zeta_x\right)-\frac{1}{|I|}\int_I\phi\left(\zeta_y\right)dy\|_\HC\, dx<\infty.
\end{equation*}
A characterization of $BMOA\left(\HC\right)$ relevant to this paper is, in the language of Section \ref{Sec:Fspaces}, that $BMOA\left(\HC\right)=F_{\infty,2}^0\left(\HC\right)$. This is the so-called Carleson measure characterization of $BMOA\left(\HC\right)$.

Given $\alpha\in\R$ and $f\in\AC\left(\XC\right)$, we define the fractional derivative $D^\alpha f$ by
\begin{equation*}
D^\alpha f\left(w\right)=\sum_{n=0}^\infty \left(1+n\right)^\alpha\hat f\left(n\right)w^n,\quad w\in\D.
\end{equation*}
Consider the class $D^\alpha H^p:=\left\{f\in\AC;D^{-\alpha}f\in H^p\right\}$ equipped with the norm $ \|f\|_{D^\alpha H^p} := \|D^{-\alpha}f\|_{H^p}$. The following result is due to Cohn and Verbitsky \cite{Cohn-Verbitsky2000:FactTentSpacesHankOps}*{Theorem 2}:

\begin{theorem}\label{thm:Cohn-Verbitsky}
	Let $\alpha>0$, $0<p,p_1,p_2<\infty$, and $p_1^{-1}+p_2^{-1}=p^{-1}$. Then
	\begin{equation*}
	D^\alpha H^p=H^{p_1}\cdot D^\alpha H^{p_2}.
	\end{equation*}
\end{theorem}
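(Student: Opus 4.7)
The plan is to apply the Littlewood--Paley / Lusin square-function characterization of $D^\alpha H^p$ established in Section \ref{Sec:Fspaces}: for any integer $k>\alpha$,
\[
\|f\|_{D^\alpha H^p}^p \sim \int_\T \left(\int_{\Gamma(\zeta_x)}(1-|w|)^{2k-2\alpha-1}|f^{(k)}(w)|^2\,dA(w)\right)^{p/2} dx,
\]
where $\Gamma(\zeta_x)$ is a non-tangential cone with vertex $\zeta_x$. Both inclusions are then reduced to estimates involving non-tangential maximal functions, Lusin area integrals, and outer functions.

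For $H^{p_1}\cdot D^\alpha H^{p_2}\subset D^\alpha H^p$, I would fix $k>\alpha$ and expand $(f_1f_2)^{(k)}$ by Leibniz. Each cross-term $f_1^{(j)}f_2^{(k-j)}$ with $0\le j<k$ is handled by distributing the weight as $(1-|w|)^{k-\alpha}=(1-|w|)^j(1-|w|)^{k-j-\alpha}$; the pointwise bound $(1-|w|)^j|f_1^{(j)}(w)|\lesssim Nf_1(\zeta_x)$ in the cone (a standard Hardy-space estimate from Cauchy's formula) reduces the area integral to a Lusin density for $f_2$. The extreme term $j=k$ is treated by the symmetric embedding of $D^\alpha H^{p_2}$ into the tent-maximal space, yielding $(1-|w|)^\alpha|f_2(w)|\lesssim N'f_2(\zeta_x)$ for a suitable maximal function $N'$, to be paired with the Lusin density of $f_1$. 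Cauchy--Schwarz inside the cone and H\"older on $\T$ with $1/p=1/p_1+1/p_2$ then close the estimate.

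For the substantive inclusion $D^\alpha H^p\subset H^{p_1}\cdot D^\alpha H^{p_2}$, I would follow Cohn and Verbitsky's outer-function strategy. Given $f\in D^\alpha H^p$, define a ``tent majorant'' $M(\zeta_x)$ on $\T$ as a sum of the Lusin area function $S_\alpha(f)(\zeta_x)$ and a weighted non-tangential maximal function of $f$, chosen large enough to dominate $f$ non-tangentially while satisfying $\|M\|_{L^p(\T)}\lesssim\|f\|_{D^\alpha H^p}$. Let $f_1$ be the outer function with boundary modulus $|f_1|=M^{p/p_1}$, so that $\|f_1\|_{H^{p_1}}^{p_1}=\|M\|_{L^p}^p\lesssim\|f\|_{D^\alpha H^p}^p$. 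Set $f_2=f/f_1$. Using the Poisson-integral lower bound on $\log|f_1(w)|$ for outer $f_1$, together with Leibniz applied to $(f/f_1)^{(k)}$, one should obtain the pointwise estimate $S_\alpha(f_2)(\zeta_x)\lesssim M(\zeta_x)^{-p/p_1}S_\alpha(f)(\zeta_x)$; integration and the algebraic identity $p_2-pp_2/p_1=p$ (equivalent to $1/p_1+1/p_2=1/p$) then yield $\|f_2\|_{D^\alpha H^{p_2}}^{p_2}\lesssim\int S_\alpha(f)^p\,d\zeta\lesssim\|f\|_{D^\alpha H^p}^p$, as required.

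The main obstacle is the construction of $M$ and the Leibniz-style decomposition of $(f/f_1)^{(k)}$ in the second direction. Since $f$ typically has irregular boundary behavior when $\alpha>0$, the naive choice $M=|f|$ on $\T$ is unavailable, and $M$ must instead be assembled so as to simultaneously dominate $f$ non-tangentially inside the cones and remain in $L^p(\T)$ with the correct norm. Likewise, one must verify that the Leibniz expansion of $(f/f_1)^{(k)}$ produces only lower-order error terms beyond the main contribution $f_1^{-1}f^{(k)}$; this is where the outer structure of $f_1$ enters through Poisson estimates on $\log|f_1|$, and where the detailed combinatorial analysis of Cohn and Verbitsky becomes essential. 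Carrying the outer-function machinery through uniformly over the full parameter range $0<p,p_1,p_2<\infty$ is the technical heart of the proof.
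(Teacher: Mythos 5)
There is a genuine gap, and it starts at the very first display: the square-function characterization you take as the starting point is the one for the wrong space. In this paper $D^\alpha H^p:=\{f\in\AC:\ D^{-\alpha}f\in H^p\}$ is the \emph{image} of $H^p$ under $D^\alpha$, i.e.\ the rough space $F^{-\alpha}_{p,2}$ with $s=-\alpha<0$; its area-integral norm is $\|(1-|z|)^{k+\alpha}D^{k}f\|_{T_{p,2}}$ for \emph{any} $k\ge 0$ (integrand $(1-|w|)^{2k+2\alpha-2}\|D^k f\|^2\,dA$ on the cone), and in particular one may take $k=0$, with no derivatives at all. Your weight $(1-|w|)^{2k-2\alpha-1}$ together with the restriction $k>\alpha$ characterizes instead the smoothness space $\{f:\ D^{\alpha}f\in H^p\}$, for which the factorization statement is false (take the second factor $\equiv 1$: $f_1\cdot 1$ with a generic $f_1\in H^{p_1}$ does not have $D^\alpha f_1\in H^p$). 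The sign error then propagates through your ``easy'' inclusion: the bound $(1-|w|)^{\alpha}|f_2(w)|\lesssim N'f_2(\zeta_x)$ you invoke for the $j=k$ term is the one valid for $f_2\in D^\alpha H^{p_2}$ in the paper's sense, and combined with your weight it leaves an uncontrolled extra factor $(1-|w|)^{-4\alpha}$ rather than the Lusin density of $f_1$; with the correct weight (or simply $k=0$) this inclusion is a one-line estimate $A_2((1-|z|)^{\alpha}f_1f_2)\le A_\infty f_1\cdot A_2((1-|z|)^{\alpha}f_2)$ plus H\"older, and no Leibniz expansion is needed.

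In the hard direction the central step is asserted, not proved: the pointwise inequality $S_\alpha(f/f_1)\lesssim M^{-p/p_1}S_\alpha(f)$ does not follow from outerness alone, since Jensen only gives $1/|f_1(w)|\le \PC[M^{-p/p_1}](w)$, and a Poisson average of $M^{-p/p_1}$ at a point of $\Gamma(\zeta_x)$ is not dominated by $M(\zeta_x)^{-p/p_1}$; similarly the derivatives of $1/f_1$ produced by Leibniz require lower bounds for $|f_1|$ on hyperbolic balls, not at single points. You yourself defer this to ``the detailed combinatorial analysis of Cohn and Verbitsky'', which is precisely the substance of the theorem, so the proposal is an outline of the original argument with its key lemma missing. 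For comparison, the paper's route is different and much shorter: Section \ref{Sec:Fspaces} shows the $F$-norms are independent of the order of differentiation, so $\|f\|_{D^\alpha H^p}\approx\|(1-|z|)^{\alpha}f\|_{T_{p,2}}$ with no derivatives; the theorem is then immediate from the tent-space factorization $T_{p,2}=H^p\cdot T_{\infty,2}$ (where all the outer-function work is encapsulated), the analyticity of $f/g$ for outer $g$, and $H^p=H^{p_1}\cdot H^{p_2}$, as in Theorem \ref{thm:FspaceFact} and Corollary \ref{thm:FspaceFact2} with $q=2$, $s=-\alpha$. Reworking your argument along those lines, or at the very least correcting the characterization and supplying the Carleson-measure estimate for $f/f_1$ in place of the claimed pointwise one, is necessary before this can count as a proof.
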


The present author's interest in the above result arose while studying the following type of bilinear forms appearing naturally in control theory, e.g. \citelist{\cite{Janson-Peetre1988:Paracomms}\cite{Rydhe2016:VecHankOpsCarlesonEmbsBMOA}}: Given $\phi\in\AC$ and $\alpha>0$, we define the bilinear Hankel type form
\begin{equation}\label{Eq:HankFormScal}
H_{\phi,\alpha}:H^2\times H^2\ni \left(g,h\right)\mapsto \left\langle h,D^\alpha\left(\phi\conj{g}\right)\right\rangle_{\AC},
\end{equation}
on analytic polynomials. The next result on $H^2$-boundedness of $H_{\phi,\alpha}$ has several proofs in the literature, e.g. \citelist{\cite{Janson-Peetre1988:Paracomms}\cite{Rydhe2016:VecHankOpsCarlesonEmbsBMOA}}. As an illustration, we prove it by applying Theorem \ref{thm:Cohn-Verbitsky}:
\begin{proposition}\label{prop:Application}
	$H_{\phi,\alpha}$ is bounded if and only if $D^\alpha \phi \in BMOA$.
\end{proposition}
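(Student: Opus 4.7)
The plan is to combine the self-adjointness of $D^\alpha$ with respect to the pairing $\left\langle \cdot,\cdot\right\rangle_\AC$, Theorem \ref{thm:Cohn-Verbitsky} in the borderline case $p=1$, $p_1=p_2=2$, and the classical $H^1$-$BMOA$ duality recalled in the introduction. The starting point is the algebraic identity, valid for analytic polynomials $g,h,\phi$,
\begin{equation*}
H_{\phi,\alpha}\left(g,h\right) = \left\langle h, D^\alpha\left(\phi\conj{g}\right)\right\rangle_\AC = \int_\T g\cdot D^\alpha h \cdot \conj{\phi}\, dx = \left\langle D^{-\alpha}\left(g\cdot D^\alpha h\right),D^\alpha\phi\right\rangle_\AC,
\end{equation*}
which follows from writing out Taylor coefficients and using that $\left(1+n\right)^\alpha$ is real. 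In words, $H_{\phi,\alpha}\left(g,h\right)$ has been re-expressed as the pairing of the product $g\cdot D^\alpha h$ with $\phi$, after a shift of $D^\alpha$ across the bracket.

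For the sufficient direction, assume $D^\alpha\phi\in BMOA$ and fix polynomials $g,h$. Setting $F:=D^{-\alpha}\left(g\cdot D^\alpha h\right)$, the inclusion $H^2\cdot D^\alpha H^2\subset D^\alpha H^1$ from Theorem \ref{thm:Cohn-Verbitsky} yields $F\in H^1$ with $\|F\|_{H^1}\lesssim \|g\|_{H^2}\|h\|_{H^2}$. The identity above then reads $H_{\phi,\alpha}\left(g,h\right)=\left\langle F, D^\alpha \phi\right\rangle_\AC$, which by $H^1$-$BMOA$ duality is controlled by a constant multiple of $\|g\|_{H^2}\|h\|_{H^2}\|D^\alpha\phi\|_{BMOA}$. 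For the necessary direction, I would verify that $F\mapsto \left\langle F, D^\alpha\phi\right\rangle_\AC$ extends to a bounded linear functional on $H^1$, and invoke duality to conclude $D^\alpha\phi\in BMOA$. Given a polynomial $F$, the reverse inclusion $D^\alpha H^1\subset H^2\cdot D^\alpha H^2$ furnishes $g,h\in H^2$ with $D^\alpha F=g\cdot D^\alpha h$ and $\|g\|_{H^2}\|h\|_{H^2}\lesssim \|F\|_{H^1}$. The same identity now reads $\left\langle F, D^\alpha\phi\right\rangle_\AC = H_{\phi,\alpha}\left(g,h\right)$, so the hypothesized boundedness of $H_{\phi,\alpha}$ yields $\left|\left\langle F, D^\alpha\phi\right\rangle_\AC\right|\lesssim \|F\|_{H^1}$.

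The only delicate point, rather than a genuine obstacle, is bookkeeping: in the necessity direction the factors $g,h$ are a priori only in $H^2$ rather than polynomial, so $H_{\phi,\alpha}$ must first be extended continuously to $H^2\times H^2$ using the boundedness hypothesis, and the formal pairings must be justified when the objects involved are not a priori in $L^2\left(\T\right)$. These matters are standard and can be handled by dilation ($f\mapsto f_r$, letting $r\to 1^-$) together with the density of polynomials in $H^2$ and in $H^1$.
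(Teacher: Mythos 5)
Your argument is correct and follows essentially the same route as the paper's own proof: shift $D^\alpha$ across the pairing to rewrite $H_{\phi,\alpha}\left(g,h\right)=\left\langle f,D^\alpha\phi\right\rangle_\AC$ with $D^\alpha f=g\cdot D^\alpha h$, use Theorem \ref{thm:Cohn-Verbitsky} with $p=1$, $p_1=p_2=2$ (both inclusions, one per direction), and conclude by Fefferman $H^1$--$BMOA$ duality. You are merely more explicit than the paper about which inclusion is used in which direction and about the density/dilation bookkeeping, which is fine.
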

\begin{proof}
	Suppose that $g,h\in H^2$, and let $f\in H^1$ be a suitable function such that $D^\alpha f= g\left(D^\alpha h\right)$. Then
	\begin{equation*}
	\left\langle h,D^\alpha\left(\phi\conj{g}\right)\right\rangle_\AC=
	\left\langle D^\alpha h,\phi\conj{g}\right\rangle_\AC=
	\left\langle g\left(D^\alpha h\right) ,\phi\right\rangle_\AC=
	\left\langle D^\alpha f,\phi\right\rangle_\AC=
	\left\langle f,D^\alpha \phi\right\rangle_\AC.
	\end{equation*}
	The statement now follows from the Fefferman $H^1-BMOA$ duality theorem.
\end{proof}

The primary aim of this paper is to consider vector-valued generalizations of Theorem \ref{thm:Cohn-Verbitsky}. Given $\phi\in\AC\left(\HC\right)$ and $\alpha>0$, there are two natural analogues of \eqref{Eq:HankFormScal}:
\begin{equation}\label{Eq:HankForm}
H_{\phi,\alpha}:H^2\times H^2\left(\HC\right)\ni \left(g,h\right)\mapsto \left\langle h,D^\alpha\left(\phi\conj{g}\right)\right\rangle_{\AC\left(\HC\right)},
\end{equation}
and
\begin{equation}\label{Eq:HankForm*}
H_{\phi,\alpha}^*:H^2\left(\HC\right)\times H^2\ni \left(g,h\right)\mapsto \left\langle h,D^\alpha\left(\left\langle \phi,g\right\rangle _\HC\right)\right\rangle_{\AC}.
\end{equation}
The proof of Proposition \ref{prop:Application} now leads us to the following questions:
\begin{itemize}
	\item[Q1:] \begin{center} Is $D^\alpha H^1\left(\HC\right)= H^2 \cdot \left(D^\alpha H^2\left(\HC\right)\right)$?\end{center}
	\item[Q2:] \begin{center} Is $D^\alpha H^1\left(\HC\right)=H^2\left(\HC\right)\cdot\left(D^\alpha H^2\right)$ ?\end{center}
\end{itemize}
The first question will receive a positive answer. This yields that $H_{\phi,\alpha}$ is bounded if and only if $D^\alpha \phi\in H^1\left(\HC\right)^*=BMOA\left(\HC\right)$, a result also obtained in \cite{Rydhe2016:VecHankOpsCarlesonEmbsBMOA}. The second question receives a negative answer. Indeed, if the answer was positive, then $H_{\phi,\alpha}$ and $H_{\phi,\alpha}^*$ would be simultaneously bounded. This would contradict the following result, essentially due to Davidson and Paulsen \cite{Davidson-Paulsen1997:PolBddOps}. See also \cite{Rydhe2016:VecHankOpsCarlesonEmbsBMOA}*{Section 4}:
\begin{proposition}\label{prop:HankForms}
	Let $\alpha>0$, $\phi\in\AC\left(\HC\right)$, and define the forms $H_{\phi,\alpha}$ and $H_{\phi,\alpha}^*$ by \eqref{Eq:HankForm} and \eqref{Eq:HankForm*} respectively. If $H_{\phi,\alpha}$ is bounded, then $H_{\phi,\alpha}^*$ is also bounded. The converse does not hold.
\end{proposition}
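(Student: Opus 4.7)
The plan is to prove the two implications separately, building on the positive answer to~Q1, which yields that $H_{\phi,\alpha}$ is bounded if and only if $D^\alpha\phi\in BMOA(\HC)$.

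\textbf{Forward direction.} I would mirror the proof of Proposition~\ref{prop:Application}. For polynomials $g\in H^2(\HC)$ and $h\in H^2$, using that $D^\alpha$ is self-adjoint with respect to the $\AC(\HC)$-pairing, one writes
\begin{equation*}
H_{\phi,\alpha}^*(g,h)=\langle D^\alpha h,\langle\phi,g\rangle_\HC\rangle_\AC=\langle g\cdot D^\alpha h,\phi\rangle_{\AC(\HC)}=\langle D^{-\alpha}(g\cdot D^\alpha h),D^\alpha\phi\rangle_{\AC(\HC)}.
\end{equation*}
Combined with $H^1(\HC)$-$BMOA(\HC)$ duality, the forward implication reduces to the product estimate $\|D^{-\alpha}(g\cdot D^\alpha h)\|_{H^1(\HC)}\lesssim\|g\|_{H^2(\HC)}\|h\|_{H^2}$. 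This is precisely the containment $H^2(\HC)\cdot D^\alpha H^2\subset D^\alpha H^1(\HC)$, that is, the ``product half'' of~Q2. Although the full equality in~Q2 fails (as the proposition itself will show), I expect this one-sided bound to hold; my plan for establishing it is a vector-valued version of Cohn and Verbitsky's tent-space argument, where the $H^1(\HC)$-norm is controlled by an $\HC$-valued area function and, via Fubini together with the Carleson-measure characterizations of $H^2(\HC)$ and $D^\alpha H^2$, one reduces to a pointwise bilinear estimate that transfers from the scalar case.

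\textbf{Converse.} For the other half, I would invoke \cite{Davidson-Paulsen1997:PolBddOps}: there exists an operator $T$ on a separable Hilbert space which is polynomially bounded but not similar to a contraction. Following \cite{Rydhe2016:VecHankOpsCarlesonEmbsBMOA}*{Section~4}, such a $T$ yields a symbol $\phi\in\AC(\HC)$ for which boundedness of $H_{\phi,\alpha}^*$ corresponds to polynomial boundedness of $T$, whereas boundedness of $H_{\phi,\alpha}$---equivalently $D^\alpha\phi\in BMOA(\HC)$---corresponds to similarity of $T$ to a contraction. The Davidson-Paulsen operator then supplies a $\phi$ with $H_{\phi,\alpha}^*$ bounded but $H_{\phi,\alpha}$ unbounded.

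The principal obstacle is the converse: giving a clean dictionary between the operator-theoretic asymmetry of Davidson-Paulsen and the row/column asymmetry of Hankel forms with $D^\alpha$-weighted $\HC$-valued symbols. The analogous correspondence is classical in the unweighted Nehari setting, but carrying it through for $\alpha>0$ requires careful bookkeeping with the fractional derivative.
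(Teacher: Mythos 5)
Your outline is sound, and it is worth noting that the paper itself offers no argument for this proposition: it is stated as essentially due to Davidson and Paulsen, with the translation into the language of the weighted Hankel forms delegated to \cite{Rydhe2016:VecHankOpsCarlesonEmbsBMOA}*{Section 4}. Your treatment of the converse is therefore exactly the paper's route; the ``dictionary'' you flag as the principal obstacle is precisely what that reference supplies (Davidson--Paulsen themselves work at $\alpha=1$ with Foguel--Hankel operators, and the extension to general $\alpha>0$ is carried out there), so nothing more is demanded of your proposal than of the paper on this point. Where you go beyond the paper is the forward half, and your argument is correct: the identity $H^*_{\phi,\alpha}\left(g,h\right)=\left\langle D^{-\alpha}\left(g\, D^\alpha h\right),D^\alpha\phi\right\rangle_{\AC\left(\HC\right)}$ checks out on Taylor coefficients, and together with the $H^1\left(\HC\right)$--$BMOA\left(\HC\right)$ duality and the Q1 characterization $\|D^\alpha\phi\|_{BMOA\left(\HC\right)}\lesssim\|H_{\phi,\alpha}\|$ it reduces everything to the product estimate $H^2\left(\HC\right)\cdot D^\alpha H^2\subset D^\alpha H^1\left(\HC\right)$. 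You do not need to rebuild a tent-space argument for that estimate: it follows from results already in the paper, namely the inclusion half of Theorem \ref{thm:FspaceProd} with $s=-\alpha$, $q=2$, $p=1$, combined with the scalar factorization $D^\alpha H^2=H^2\cdot F^{-\alpha}_{\infty,2}$ from Theorem \ref{thm:FspaceFact} and the elementary inclusion $H^2\left(\HC\right)\cdot H^2\subset H^1\left(\HC\right)$. One caution on logical order: the inclusion parts of Theorems \ref{thm:TspaceFact} and \ref{thm:FspaceProd} are proved independently of the present proposition, whereas their strictness assertions are deduced from it; your argument only uses the inclusions, so there is no circularity, but this is worth stating explicitly if you assemble the proof from the paper's results rather than from a fresh tent-space estimate.
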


As in \cite{Cohn-Verbitsky2000:FactTentSpacesHankOps}, we state our main result in the language of Triebel--Lizorkin spaces $F_{p,q}^s$. Their definition is quite elaborate, and we refer to Section \ref{Sec:Fspaces}, where we discuss different characterizations. Remarkably, our results hold for analytic functions taking values in an arbitrary Banach space:
\begin{theorem}\label{thm:FspaceFact}
	Let $\XC$ be a complex Banach space, $s<0$, $0<p<\infty$ and $1\le q<\infty$. Then
	\begin{equation*}
	F_{p,q}^s\left(\XC\right)=H^p \cdot F_{\infty,q}^s\left(\XC\right).
	\end{equation*}
	Moreover, the $H^p$-factor can be constructed as an outer analytic function.
\end{theorem}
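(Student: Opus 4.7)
The plan is to work with the Littlewood--Paley area-function characterization of $F_{p,q}^s$ developed in Section \ref{Sec:Fspaces}. For $s<0$, one can bypass fractional derivatives and take
\[
Af(\zeta):=\left(\int_{\Gamma(\zeta)}(1-|w|)^{-sq-2}\|f(w)\|_\XC^q\,dA(w)\right)^{1/q},\qquad \|f\|_{F_{p,q}^s(\XC)}\approx \|Af\|_{L^p(\T)},
\]
where $\Gamma(\zeta)$ is a fixed nontangential cone, while $f\in F_{\infty,q}^s(\XC)$ corresponds to $d\mu_f:=(1-|w|)^{-sq-1}\|f(w)\|_\XC^q\,dA(w)$ being a Carleson measure on $\D$. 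Both quantities only see the scalar modulus $\|f(\cdot)\|_\XC$, so the Banach-space generalization is essentially free once the scalar factorization is understood.

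For the easy inclusion $H^p\cdot F_{\infty,q}^s(\XC)\subset F_{p,q}^s(\XC)$, the pointwise bound $\|f_1f_2\|_\XC\le|f_1|\,\|f_2\|_\XC$ combined with the tent-space form of the Carleson embedding theorem gives
\[
\|A(f_1f_2)\|_{L^p}^p\;\lesssim\;\|\mu_{f_2}\|_{CM}^{p/q}\,\|Nf_1\|_{L^p}^p\;\approx\;\|f_2\|_{F_{\infty,q}^s(\XC)}^p\,\|f_1\|_{H^p}^p,
\]
where $Nf_1$ denotes the nontangential maximal function.

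For the reverse inclusion, the plan is to construct $f_1$ as an outer function whose boundary modulus is an $A_1$-weighted majorant of $Af$. Fix $\delta\in(0,1)$ and $r\in(0,p\delta)$, set $h:=(M((Af)^{r/\delta}))^{\delta/r}$ with $M$ the Hardy--Littlewood maximal operator on $\T$, and let $f_1$ be the outer function with boundary modulus $h$. Since $p\delta/r>1$, $L^{p\delta/r}$-boundedness of $M$ yields $\|f_1\|_{H^p}=\|h\|_{L^p}\lesssim\|Af\|_{L^p}\lesssim\|f\|_{F_{p,q}^s(\XC)}$, and the Coifman--Rochberg theorem ensures that $h^r$ is an $A_1$ (hence $A_\infty$) weight. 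Setting $f_2:=f/f_1\in\AC(\XC)$, which is well defined since $f_1$ is zero-free on $\D$, the $A_\infty$ reverse-Jensen inequality applied to $h^r$ gives the pointwise lower bound
\[
|f_1(w)|^r = \exp\left(\int_\T P_w(\zeta)\log h^r(\zeta)\,d\zeta\right)\;\gtrsim\;(Ph^r)(w)\;\gtrsim\;\frac{1}{|I|}\int_I(Af)^r
\]
for $w$ at the top of a Carleson tent $T(I)$. Combining this with the tent-averaging identity $\int_{T(I)}d\mu_f\approx\int_I(Af)^q$, and propagating the lower bound throughout $T(I)$ via a Whitney decomposition, yields $\|\mu_{f_2}\|_{CM}\lesssim 1$, placing $f_2$ in $F_{\infty,q}^s(\XC)$.

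The main technical obstacle is this last step: the outer-function lower bound carries exponent $r$ while the Carleson integrand has exponent $q$, and neither matches $p$ in general, so a careful Whitney-scale summation is required to reconcile the three exponents. The two free parameters $r,\delta$ in the construction of $h$ earn their keep in this bookkeeping and in accommodating the range $0<p<1$, where a raw $M(Af)$ would not lie in $L^p$. The vector-valued setting introduces no additional difficulty, since $Af$ and $d\mu_f$ are scalar quantities and the entire argument reduces to the scalar modulus $\|f(\cdot)\|_\XC$.
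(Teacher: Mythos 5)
Your overall framing is the same as the paper's: for $s<0$ you use the derivative-free area-function description of $F^s_{p,q}(\XC)$ (Definition \ref{def:Fpqs} with $\alpha=0$, legitimized by Theorem \ref{thm:Norms}), the Carleson-measure description of $F^s_{\infty,q}(\XC)$, and the observation that both quantities only see $\|f(\cdot)\|_\XC$, so that dividing by a zero-free scalar outer factor makes the vector-valued case ride on the scalar one — this is precisely the reduction in the proofs of Theorems \ref{thm:FspaceFact} and \ref{thm:TspaceFact}. Your easy inclusion via the tent-space Carleson embedding is fine. Where you diverge is the hard inclusion: the paper does not reprove the factorization of tent spaces; it simply notes that $f\in F^s_{p,q}(\XC)$ means $(1-|z|)^{-s}f\in T_{p,q}(\XC)$ and quotes the scalar result $T_{p,q}=H^p\cdot T_{\infty,q}$ of \cite{Cohn-Verbitsky2000:FactTentSpacesHankOps}, whereas you set out to rebuild the outer factor from scratch via Coifman--Rochberg $A_1$ weights and the reverse-Jensen property of $A_\infty$ weights.

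That is where the genuine gap lies: the decisive step — the Carleson-measure bound for $f_2=f/f_1$ — is never established. Your lower bound $|f_1(w)|^r\gtrsim |I_w|^{-1}\int_{I_w}(Af)^r$ at Whitney scale is correct, but when you sum over the Whitney boxes of $T(I)$ the per-box estimate you would need has the shape $\int_J (Af)^q\lesssim |J|\bigl(|J|^{-1}\int_{J}(Af)^r\bigr)^{q/r}$ with $r<q$, i.e. a reverse H\"older inequality for $Af$ that is false in general; reconciling the exponents $r$, $q$, $p$ requires the stopping-time/level-set argument that is the actual content of Cohn and Verbitsky's theorem. You flag this yourself ("a careful Whitney-scale summation is required") but do not supply it, so the inclusion $F^s_{p,q}(\XC)\subset H^p\cdot F^s_{\infty,q}(\XC)$ remains unproved. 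The quickest repair is the paper's route: keep your vector-to-scalar reduction (factor $\|f\|_\XC=gH$ with $g\in H^p$ outer, then set $h=f/g$) and cite the scalar tent-space factorization rather than re-deriving it; if you insist on a self-contained construction, you must actually carry out the stopping-time argument at the missing step.
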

\begin{remark}
	The attentive reader will of course note that we are lacking a statement for $0<q<1$. This is perhaps the main shortcoming of this paper, and it adheres to the fact that for this range of parameters we are not able to define the corresponding spaces.
\end{remark}

Since $H^p=H^{p_1}\cdot H^{p_2}$ whenever $p_1^{-1}+p_2^{-1}=p^{-1}$, Theorem \ref{thm:FspaceFact} implies that 
\begin{equation*}
F_{p,q}^s\left(\XC\right)=H^p \cdot F_{\infty,q}^s\left(\XC\right)=H^{p_1} \cdot H^{p_2} \cdot F_{\infty,q}^s\left(\XC\right)=H^{p_1} \cdot F_{p_2,q}^s\left(\XC\right).
\end{equation*}
Theorem \ref{thm:Cohn-Verbitsky} corresponds to the special case $D^\alpha H^p=F_{p,2}^{-\alpha}$. For emphasis, we state a corollary:
\begin{corollary}\label{thm:FspaceFact2}
	Let $\XC$ be a complex Banach space, $s<0$, $0<p,p_1,p_2<\infty$ and $1\le q<\infty$. If $p_1^{-1}+p_2^{-1}=p^{-1}$, then
	\begin{equation*}
	F_{p,q}^s\left(\XC\right)=H^{p_1} \cdot F_{p_2,q}^s\left(\XC\right).
	\end{equation*}
\end{corollary}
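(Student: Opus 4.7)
The corollary is a direct consequence of Theorem \ref{thm:FspaceFact} combined with the classical factorization $H^p = H^{p_1}\cdot H^{p_2}$ for Hardy spaces on the disc (which follows from inner-outer factorization: decompose $g = \Theta g_0$ with $\Theta$ inner and $g_0$ outer, then split $g_0 = g_1 g_2$ with $|g_i| = |g_0|^{p/p_i}$ on $\mathbb{T}$, so that $\Theta g_1 \in H^{p_1}$, $g_2 \in H^{p_2}$, and $\|g_1\|_{H^{p_1}}\|g_2\|_{H^{p_2}} = \|g\|_{H^p}$). My plan is to verify the two inclusions separately, chaining Theorem \ref{thm:FspaceFact} with this classical fact.

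For the inclusion $H^{p_1}\cdot F_{p_2,q}^s(\mathcal{X}) \subset F_{p,q}^s(\mathcal{X})$, I take $g_1 \in H^{p_1}$ and $f_2 \in F_{p_2,q}^s(\mathcal{X})$. Applying the factorization direction of Theorem \ref{thm:FspaceFact} to $f_2$ yields $f_2 = g_2 h$ with $g_2 \in H^{p_2}$, $h \in F_{\infty,q}^s(\mathcal{X})$, and $\|g_2\|_{H^{p_2}}\|h\|_{F_{\infty,q}^s(\mathcal{X})} \lesssim \|f_2\|_{F_{p_2,q}^s(\mathcal{X})}$. Then $g_1 f_2 = (g_1 g_2) h$; Hölder's inequality on $\mathbb{T}$ gives $g_1 g_2 \in H^p$ with $\|g_1 g_2\|_{H^p} \le \|g_1\|_{H^{p_1}}\|g_2\|_{H^{p_2}}$, and the multiplier direction of Theorem \ref{thm:FspaceFact} gives $(g_1 g_2) h \in F_{p,q}^s(\mathcal{X})$ with the required norm bound.

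For the reverse inclusion $F_{p,q}^s(\mathcal{X}) \subset H^{p_1}\cdot F_{p_2,q}^s(\mathcal{X})$, I start with $f \in F_{p,q}^s(\mathcal{X})$ and apply the factorization direction of Theorem \ref{thm:FspaceFact} to write $f = g h$ with $g \in H^p$, $h \in F_{\infty,q}^s(\mathcal{X})$, and control of the norms. Next, I apply the classical $H^p$ factorization to split $g = g_1 g_2$ with $g_i \in H^{p_i}$ and $\|g_1\|_{H^{p_1}}\|g_2\|_{H^{p_2}} \lesssim \|g\|_{H^p}$. Regrouping as $f = g_1 (g_2 h)$, the multiplier direction of Theorem \ref{thm:FspaceFact} shows that $g_2 h \in F_{p_2,q}^s(\mathcal{X})$ with $\|g_2 h\|_{F_{p_2,q}^s(\mathcal{X})} \lesssim \|g_2\|_{H^{p_2}}\|h\|_{F_{\infty,q}^s(\mathcal{X})}$. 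Combining the estimates yields $\|g_1\|_{H^{p_1}}\|g_2 h\|_{F_{p_2,q}^s(\mathcal{X})} \lesssim \|f\|_{F_{p,q}^s(\mathcal{X})}$.

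There is no serious obstacle here beyond bookkeeping of constants; the only subtle ingredient is the classical $H^p$-factorization, which is standard. The key conceptual point is simply that Theorem \ref{thm:FspaceFact} is formally a ``one-sided'' statement whose left factor lives in $H^p$, and since $H^p$ itself splits multiplicatively under Hölder exponents, the Triebel--Lizorkin factorization inherits the same flexibility.
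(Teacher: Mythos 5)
Your proof is correct and follows essentially the same route as the paper: the author also obtains the corollary by chaining Theorem \ref{thm:FspaceFact} (applied at exponents $p$ and $p_2$) with the classical factorization $H^p=H^{p_1}\cdot H^{p_2}$, writing $F_{p,q}^s\left(\XC\right)=H^p\cdot F_{\infty,q}^s\left(\XC\right)=H^{p_1}\cdot H^{p_2}\cdot F_{\infty,q}^s\left(\XC\right)=H^{p_1}\cdot F_{p_2,q}^s\left(\XC\right)$. You merely spell out the two inclusions and the norm bookkeeping that the paper leaves implicit.
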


We also obtain a non-factorization result:
\begin{theorem}\label{thm:FspaceProd}
	Let $\XC$ be a complex Banach space, $s<0$, $0<p<\infty$ and $1\le q<\infty$. Then
	\begin{equation*}
	H^p\left(\XC\right) \cdot F_{\infty,q}^s\subset F_{p,q}^s\left(\XC\right).
	\end{equation*}
	In general, this inclusion is strict. In particular, Proposition \ref{prop:HankForms} shows that if $\HC$ is an infinite-dimensional Hilbert space, then there exists $f\in F_{1,2}^s\left(\HC\right)$, such that for any $g\in H^1\left(\HC\right)$ and $h\in F_{\infty,2}^s$, $f\ne gh$.
\end{theorem}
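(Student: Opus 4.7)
\emph{Strategy for the inclusion.} The proof would exploit an area-integral characterization of $F_{p,q}^s\left(\XC\right)$ and a Carleson-measure characterization of $F_{\infty,q}^s$, both expressed in terms of $D^k$ for a sufficiently large integer $k>-s$, as developed in Section \ref{Sec:Fspaces}. Given $g\in H^p\left(\XC\right)$ and $h\in F_{\infty,q}^s$, Leibniz's rule decomposes $D^k(gh)$ into a finite sum of terms $D^j g\cdot D^{k-j}h$. On a nontangential cone $\Gamma\left(\zeta\right)$, the Cauchy estimate $\|D^j g(w)\|_\XC\lesssim (1-|w|)^{-j}N_\XC(g)(\zeta)$ allows one to extract the vector-valued factor as the non-tangential maximal function $N_\XC(g)(\zeta)$, leaving a scalar area integral of $D^{k-j}h$ against an adjusted weight, which is accommodated within the $F_{\infty,q}^s$ framework thanks to the equivalence of characterizations across different values of $k$. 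The bound then reduces to the tent-space factorization $T^p_\infty\cdot T^\infty_q\subset T^p_q$ of Coifman--Meyer--Stein, giving $\|gh\|_{F_{p,q}^s\left(\XC\right)}\lesssim \|N_\XC(g)\|_{L^p(\T)}\|h\|_{F_{\infty,q}^s}\asymp \|g\|_{H^p\left(\XC\right)}\|h\|_{F_{\infty,q}^s}$. Notably, no Hilbert structure on $\XC$ is needed; the sole vector-valued input is $\|gh(w)\|_\XC=|h(w)|\|g(w)\|_\XC$ for scalar $h$.

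\emph{Strategy for strictness.} Specialize to $\XC=\HC$ infinite-dimensional, $p=1$, $q=2$, $s=-\alpha$, and argue by contradiction. If the inclusion $H^1\left(\HC\right)\cdot F_{\infty,2}^{-\alpha}\subset F_{1,2}^{-\alpha}\left(\HC\right)$ were an equality, then combining with the outer factorization $H^1\left(\HC\right)=H^2\cdot H^2\left(\HC\right)$ (obtained by letting the scalar outer factor absorb $\|\cdot\|_\HC^{1/2}$) and the scalar Cohn--Verbitsky identity $D^\alpha H^2=H^2\cdot D^\alpha BMOA$, it would upgrade to $D^\alpha H^1\left(\HC\right)=H^2\left(\HC\right)\cdot D^\alpha H^2$. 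For polynomial $g\in H^2\left(\HC\right)$, $h\in H^2$, the Fourier manipulation of Proposition \ref{prop:Application} gives
\begin{equation*}
H_{\phi,\alpha}^*(g,h)=\left\langle g\,D^\alpha h,\phi\right\rangle_{\AC\left(\HC\right)}=\left\langle f,D^\alpha\phi\right\rangle_{\AC\left(\HC\right)},
\end{equation*}
where $f:=D^{-\alpha}(g\,D^\alpha h)\in H^1\left(\HC\right)$. Under the assumed factorization, $f$ ranges densely over $H^1\left(\HC\right)$ with equivalent norm, so Fefferman's duality forces $H_{\phi,\alpha}^*$ to be bounded iff $D^\alpha\phi\in BMOA\left(\HC\right)$. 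But Corollary \ref{thm:FspaceFact2} yields the identical characterization for $H_{\phi,\alpha}$. The two forms would then be simultaneously bounded, contradicting Proposition \ref{prop:HankForms}.

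\emph{Main obstacle.} The principal difficulty is in the inclusion: one must calibrate the derivative order $k$, the weight exponents, and the various Leibniz summands consistently across the parameter ranges $0<p<\infty$ and $1\le q<\infty$, so that the Coifman--Meyer--Stein tent-space machinery applies cleanly and uniformly. The strictness part is essentially a packaged contradiction, immediate once the pairing identity and the supposed factorization are set up.
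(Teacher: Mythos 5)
Your strictness argument coincides with the paper's: assuming the inclusion were an equality, combine $H^1\left(\HC\right)=H^2\cdot H^2\left(\HC\right)$ with the scalar factorization $D^\alpha H^2=H^2\cdot D^\alpha BMOA$ to get a positive answer to Q2, i.e. $D^\alpha H^1\left(\HC\right)=H^2\left(\HC\right)\cdot D^\alpha H^2$; the pairing identity and Fefferman duality then make $H_{\phi,\alpha}$ and $H_{\phi,\alpha}^*$ simultaneously bounded, contradicting Proposition \ref{prop:HankForms}. This is exactly how the paper deduces strictness (see its discussion of Q2 in the introduction); the only point worth making explicit is that the identification $F_{1,2}^{-\alpha}\left(\HC\right)=D^\alpha H^1\left(\HC\right)$ rests on the Hilbert-valued square function characterization, which is why strictness is claimed only for $\HC$.

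For the inclusion your route is genuinely different from, and much heavier than, the paper's. Since $s<0$, the paper simply takes $\alpha=0$ in Definition \ref{def:Fpqs}: then $gh\in F_{p,q}^s\left(\XC\right)$ iff $\left(1-|z|\right)^{-s}gh\in T_{p,q}\left(\XC\right)$, and writing $\left(1-|z|\right)^{-s}gh=g\cdot\left[\left(1-|z|\right)^{-s}h\right]$ with $\left(1-|z|\right)^{-s}h\in T_{\infty,q}$, the claim is exactly the second half of Theorem \ref{thm:TspaceFact}, itself obtained from the scalar result by majorizing $\|g\|_\XC$ by a scalar outer function; no derivatives appear at all. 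Your scheme (large $k$, Leibniz expansion, Cauchy estimates on cones, the Coifman--Meyer--Stein product $T^p_\infty\cdot T^\infty_q\subset T^p_q$) is in the right spirit -- the final tent-space step is the same estimate the paper uses -- but as written it has a concrete defect: the paper's $D$ is the coefficient multiplier $\left(1+n\right)^\alpha$, i.e. $Df=\left(wf\right)'$, not differentiation, so $D^k\left(gh\right)$ is \emph{not} $\sum_j\binom{k}{j}D^jg\,D^{k-j}h$ (test $g=h=1$: $D\left(1\right)=1\neq 2$). You would either have to derive a modified Leibniz expansion for this operator or pass to ordinary derivatives and prove an equivalence of norms that the paper does not supply -- precisely the calibration problem you identify as the main obstacle, and which the choice $\alpha=0$ (available exactly because $s<0$) makes vacuous.
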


The first ingredient needed in order to generalize Theorem \ref{thm:Cohn-Verbitsky} is a factorization result for tent spaces, Theorem \ref{thm:TspaceFact}. We point out that the proofs from \cite{Cohn-Verbitsky2000:FactTentSpacesHankOps} go through also in the vector-valued case (replacing moduli with vector space norms). However, the scalar-valued result even implies the vector-valued one. We demonstrate how in Section \ref{Sec:Tentspaces}.

We will also need some properties of Triebel--Lizorkin spaces of analytic functions on $\D$. These appear in the literature (e.g. \cite{Cohn-Verbitsky2000:FactTentSpacesHankOps}) but I have not been able to find any stringent justification. The vector-valued setting that we consider does not require any additional effort. Nevertheless, this setting does not appear to have been considered before. For these reasons we dedicate Section \ref{Sec:Fspaces} to establishing some rudimentary theory. We develop our theory in the language of the more general class of distribution spaces $F_{p,q}^{s,\tau}\left(\R^d\right)$ introduced in by Yang and Yuan \citelist{\cite{Yang-Yuan2008:NewFcnSpacesTriebel--LizorkinSpacesQSpaces}\cite{Yang-Yuan2010:NewBesov-TypeTriebel-Lizorkin-TypeSpacesQSpaces}}. Rather than increasing our efforts, $F_{p,q}^{s,\tau}\left(\XC\right)$ unifies the spaces $F_{p,q}^s\left(\XC\right)$ where $p<\infty$, and $F_{\infty,q}^s\left(\XC\right)$, perhaps even decreasing the amount of work needed. Another motivation to study the generalized scale is that it encompasses more spaces, for example the so-called $Q$-spaces introduced by Aulaskari, Xiao and Zhao \cite{Aulaskari-Xiao-Zhao1995:SubspacesSubsetsBMOAUBC}.

\section{Preliminaries and notation}\label{Sec:Prelim}
We use the standard notation $\Z$, $\R$, and $\C$ for the respective rings of integers, real numbers, and complex numbers. In addition, $\D  =\left\{w\in\C;|w|<1\right\}$ and $\T  =\left\{\zeta\in\C;|\zeta|=1\right\}$. We will often identify $\T$ with $\R/\Z$, using the map $x\mapsto e^{2\pi i x}$. Subsets of $\R/\Z$ and $\T$ are identified in a similar way. In particular, we let the set of dyadic arcs $\DC\left(\T\right)$ be the image of the set $\DC\left(\left[0,1\right)\right)=\left\{\left[2^{-j}k,2^{-j}\left(k+1\right)\right);j\in\N_0,0\le k \le 2^j-1\right\}$. Note that in general, an arc $I\subset \T$ may correspond to the union of two intervals in $\left[0,1\right)$. We use the letters $x,y,z$ to denote generic points on $\R$. By $\zeta_x$ we denote the point $e^{2\pi i x}\in\T$. The arc-wise distance between $\zeta_x$ and $\zeta_y$ is denoted by $|\zeta_x-\zeta_y|$. A Euclidean ball with radius $r$ and center $w$ is denoted $B\left(w,r\right)$.  Given two parametrized sets of nonnegative numbers $\left\{A_i\right\}_{i\in I}$ and $\left\{B_i\right\}_{i\in I}$, we use the notation $A_i\lesssim B_i$, $i\in I$ to indicate the existence of a positive constant $C$ such that $A_i\le CB_i$ whenever $i\in I$. Sometimes we allow ourselves to not mention the index set $I$ and instead let it be implicit from the context. If $A_i\lesssim B_i$ and $B_i\lesssim A_i$, then we write $A_i\approx B_i$.

For a background on the Bochner--Lebesgue classes $L^p\left(\T,\XC\right)$ we refer to \cite{Diestel-Uhl1977:VecMeasures}. Given a strongly measurable function $f:\T\to\XC$, we define the corresponding Hardy-Littlewood maximal function by 
\begin{equation*}
Mf\left(\zeta_x\right)=\sup_{\substack{\textnormal{arcs }I\subset \T ;\\ I \textnormal{ centered at }\zeta_x}}\int_{\zeta_y\in I} \|f\left(\zeta_y\right)\|_\XC\,  dy,\quad \zeta_x\in\T.
\end{equation*}
The following periodic analogue of the vector-valued maximal theorem follows easily from \cite{Fefferman-Stein1971:MaximalIneq}*{Theorem 1}:
\begin{theorem}\label{thm:Fefferman-Stein}
	Let $\beta,\gamma\in\left(1,\infty\right)$. Then there is a number $K=K\left(\beta,\gamma\right)$ such that
	\begin{equation*}
	 \|\left(\sum_{n=1}^\infty |Mf_n|^\beta\right)^{1/\beta}\|_{L^\gamma\left(\T\right)}
	\le
	K_{\beta,\gamma}  \|\left(\sum_{n=1}^\infty |f_n|^\beta\right)^{1/\beta}\|_{L^\gamma\left(\T\right)}
	\end{equation*}
	whenever $\left(f_n\right)_{n=1}^\infty$ is a sequence of measurable $\C$-valued functions on $\T$. 
\end{theorem}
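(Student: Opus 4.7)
The plan is to transfer the Euclidean vector-valued maximal inequality to the periodic setting via a standard restriction-and-periodization argument. Identifying $\T$ with $\R/\Z$ as in the paper, let $\tilde f_n:\R\to\C$ denote the $1$-periodic extension of $f_n$ and set $F_n:=\tilde f_n\cdot\1_{[-1,2]}$. Any arc $I\subset\T$ centered at $\zeta_x$ has length $|I|\le 1$ and lifts to an interval $(x-r,x+r)$ with $r\le 1/2$. For $x\in[0,1]$ this interval is contained in $[-1/2,3/2]\subset[-1,2]$, so the arc-average of $|f_n(\zeta_{\cdot})|$ over $I$ coincides with the corresponding Euclidean average of $|F_n|$. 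Taking the supremum over admissible $r$ gives
\begin{equation*}
Mf_n(\zeta_x)\le M_{\R}F_n(x),\quad x\in[0,1],
\end{equation*}
where $M_\R$ denotes the centered Hardy--Littlewood maximal operator on $\R$.

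Next I would apply \cite{Fefferman-Stein1971:MaximalIneq}*{Theorem 1} to the sequence $(F_n)$ on $\R$, obtaining
\begin{equation*}
\int_\R\Bigl(\sum_{n=1}^\infty|M_\R F_n(y)|^\beta\Bigr)^{\gamma/\beta}dy\lesssim\int_\R\Bigl(\sum_{n=1}^\infty|F_n(y)|^\beta\Bigr)^{\gamma/\beta}dy.
\end{equation*}
Restricting the left-hand integral to $[0,1]$ and invoking the pointwise bound controls $\|(\sum_n|Mf_n|^\beta)^{1/\beta}\|_{L^\gamma(\T)}^\gamma$ from above. On the right, $F_n$ is supported in $[-1,2]$, and by $1$-periodicity of $\tilde f_n$ the integrand over $[-1,2]$ equals three translated copies of the integrand over $[0,1]$; the latter equals $\|(\sum_n|f_n|^\beta)^{1/\beta}\|_{L^\gamma(\T)}^\gamma$. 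Combining these bounds yields the desired inequality with constant $K_{\beta,\gamma}=3^{1/\gamma}K^{\R}_{\beta,\gamma}$, where $K^{\R}_{\beta,\gamma}$ is the constant from the Euclidean theorem.

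The entire argument is essentially bookkeeping: the only genuine analytic ingredient is the Fefferman--Stein theorem on $\R$, and the truncation costs only a harmless universal factor. The key feature that makes the transfer painless in the vector-valued setting is that the pointwise bound $Mf_n(\zeta_x)\le M_\R F_n(x)$ holds simultaneously for all $n$, so the outer $\ell^\beta$-sum is preserved verbatim when passing from $\T$ to $\R$ and back. No obstacle should arise that requires genuine analysis beyond the cited Euclidean result.
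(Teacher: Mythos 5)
Your proof is correct and is precisely the routine transference argument the paper has in mind when it says the periodic statement ``follows easily'' from Theorem 1 of Fefferman--Stein: periodize, truncate to a bounded neighborhood of $[0,1]$, dominate the periodic (centered, normalized) maximal function pointwise by the Euclidean one, and pay only a factor $3^{1/\gamma}$ on the right-hand side. The only cosmetic remark is that you implicitly use the normalized arc-average $\frac{1}{|I|}\int_I$, which is clearly what the paper's displayed definition of $M$ intends even though the factor $\frac{1}{|I|}$ is missing there.
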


For $f\in L^1\left(\T,\XC\right)$, we define the Poisson extension
\begin{equation*}
f_r\left(\zeta_x\right)=\PC\left[f\right]\left(w\right)=\int_\T f\left(\zeta_y\right) P_r\left(\zeta_{x-y}\right)\, dy,\quad w=r\zeta_x,
\end{equation*}
where
\begin{equation*}
P_r\left(\zeta_y\right)=\frac{1-r^2}{1-2r\cos\left(2\pi y\right)+r^2}=\frac{1}{1-r\zeta_y}+\frac{r\conj{\zeta_y}}{1-r\conj{\zeta_y}},\quad y\in\R,
\end{equation*}
is the Poisson kernel for $\D$. By geometric summation, $\hat P_r\left(n\right)=r^{|n|}$. It is well-known that $ \|f_r\|_{L^p\left(\T,\XC\right)}\le  \|f\|_{L^p\left(\T,\XC\right)}$, and that $ \|f_r\left(x\right)\|_\XC \lesssim Mf\left(x\right)$, e.g. \cite{Garnett2007:BddAnalFcnsBook}*{Sections I.3 an I.4}. Note also that
\begin{equation}\label{Eq:PoissonDecay}
P_r\left(\zeta_y\right)\approx \frac{1}{1-r}\frac{1}{1+\left(\frac{|\zeta_y-1|}{1-r}\right)^2},\quad \zeta_y\in \T.
\end{equation}

A function $v:\D\to \left[-\infty,\infty\right)$ is called upper semi-continuous if for each $w_0\in\D$, $\limsup_{w\to w_0}v\left(w\right)\le v\left(w_0\right)$. If $v$ is upper semi-continuous and if for each $w_0\in\D$ there exists $r_0>0$ such that for each $0<r<r_0$, $B\left(w_0,r\right)\subset \D$ and
\begin{equation*}
v\left(w_0\right)\le \frac{1}{\pi r^2}\int_{B\left(w_0,r\right)}v\left(w\right)\, dA\left(w\right),
\end{equation*} 
then we say that $v$ is subharmonic. If $v$ is subharmonic, then the function $\left[0,1\right)\ni r\mapsto \int_\T v\left(r\zeta_x\right)\, dx$ is increasing. If $v$ is subharmonic and extends continuously to $\T$, then $v$ is majorized by the Poisson extension of its boundary values, i.e.
\begin{equation*}
v\left(w\right)\le \int_\T v\left(\zeta_y\right)P_r\left(\zeta_{x-y}\right)\, dy,\quad w=r\zeta_x,
\end{equation*}
For proofs of these claims, we refer to \cite{Garnett2007:BddAnalFcnsBook}. If $f\in\AC\left(\XC\right)$, then for any $0<p<\infty$, the function $\D\ni w\mapsto  \|f\left(w\right)\|_\XC^p$ is subharmonic, e.g. \cite{Rosenblum-Rovnyak1985:HardyClassesOpTheory}*{Chapter 4}.

A Stolz angle $\Gamma\left(\zeta_x\right)$ is the convex hull of the set $\left\{\zeta_x\right\}\cup\frac{1}{2}\T$. The non-tangential maximal function of $f\in\AC\left(\XC\right)$ is given by
\begin{align*}
A_\infty f\left(\zeta_x\right)=\sup_{w\in\Gamma\left(\zeta_x\right)} \|f\left(w\right)\|_\XC,\quad \zeta_x\in\T.
\end{align*}
It is well-known that $f\in H^p$ if and only if $f\in\AC$ and $A_\infty f\in L^p$. This so-called non-tangential maximal characterization of $H^p$ carries over to the general $\XC$-valued setting. This is known, e.g. \cite{Blasco1988:HardySpacesVecValDuality}*{Lemma 1.1}, but for the reader's convenience we provide a short indication of a proof based on the Szegö--Solomentsev theorem \cite{Rosenblum-Rovnyak1985:HardyClassesOpTheory}*{Appendix 2}. It is obvious that $\left\{f\in\AC\left(\XC\right);\, A_\infty f\in L^p \right\}\subset H^p\left(\XC\right)$. On the other hand, if $f\in H^p\left(\XC\right)$, then $u\left(\zeta_x\right)=\lim_{r\to 1} \|f_r\left(\zeta_x\right)\|_\XC$ exists Lebesgue-a.e. By Fatou's lemma, $ \|u\|_{L^p\left(\T\right)}\le  \|f\|_{H^p\left(\XC\right)}$. Moreover, $\log u\in L^1\left(\T\right)$ and $\log \|f\left(w\right)\|_\XC\le h\left(w\right)$, where $h=\PC\left[\log u\right]$. Let $\tilde h$ be the harmonic conjugate of $h$, with $\tilde h\left(0\right)=0$, and define $g=\exp(h+i\tilde h)$. Then $ \|f\|_\XC\le |g|$, and so $ \|f\|_{H^p\left(\XC\right)}\le  \|g\|_{H^p}$. A standard application of Jensen's inequality shows that $|g|^p\le \PC\left[u^p\right]$. Thus $ \|g\|_{H^p}\le  \|u\|_{L^p\left(\T\right)}\le  \|f\|_{H^p\left(\XC\right)}$. The non-tangential maximal characterization of $H^p\left(\XC\right)$ now follows from the scalar case, since
\begin{equation*}
 \|A_\infty f\|_{L^p}\le  \|A_\infty g\|_{L^p}\lesssim  \|g\|_{H^p} =  \|f\|_{H^p\left(\XC\right)}. 
\end{equation*}

The square function of $f\in\AC\left(\XC\right)$ is given by
\begin{align*}
S f\left(\zeta_x\right)=\int_{w\in\Gamma\left(\zeta_x\right)} \|\left(Df\right)\left(w\right)\|_\XC^2 \, dA\left(w\right),\quad \zeta_x\in\T.
\end{align*}
It is a famous result by Fefferman and Stein \cite{Fefferman-Stein1972:HpSpaces} that if $f\in\AC\left(\HC\right)$, then $A_\infty f\in L^p\left(\T\right)$ if and only if $Sf\in L^p\left(\T\right)$. In general, $H^p\left(\XC\right)$ may fail to have a square function characterization, e.g. \cite{Davidson-Paulsen1997:PolBddOps}*{Remark 4.11}. The duality between $H^1\left(\HC\right)$ and $BMOA\left(\HC\right)$ is a celebrated theorem by Fefferman \cite{Fefferman1971:CharBMO}, adapted to analytic functions on $\D$ (e.g. \cite{Garnett2007:BddAnalFcnsBook}*{Exercise VI.5}), with values in $\HC$ (e.g. \cite{Blasco1997:VecValBMOAGeomBSpaces}).

If $f\in H^p\left(\HC\right)$, then there exists $bf\in L^p\left(\T,\HC\right)$ such that $bf\left(\zeta_x\right)$ equals the non-tangential limit $\lim_{w\to\zeta_x}f\left(w\right)$ for almost every $\zeta_x$, and $f_r\to bf$ in $L^p\left(\T,\HC\right)$. Moreover, $f=\PC\left[bf\right]$, e.g. \cite{Rosenblum-Rovnyak1985:HardyClassesOpTheory}*{Chapter 4}. For this reason we will typically not distinguish between a function $f\in H^p\left(\HC\right)$, and its boundary values $bf\in L^p\left(\HC\right)$.

The convolution of $f\in\AC$ and $g\in \AC\left(\XC\right)$ is defined as $f\ast g\in \AC\left(\XC\right)$ with $\left(f\ast g\right)^{\hat{}}\left(n\right)=\hat f \left(n\right)\hat g\left(n\right)$. If $f$ and $g$ are Poisson extensions of integrable functions, then so is $f\ast g$, and
\begin{equation*}
\left(f\ast g\right) \left(\zeta_x\right)=\int_\T f\left(\zeta_{x-y}\right)g\left(\zeta_y\right)\, dy,\quad \zeta_x\in\T.
\end{equation*}

Given a smooth function $\varphi:\R\to\C$, let $\varphi^{\left(k\right)}$ denote its (classical) derivative of order $k\in\N_0$. The Schwartz space $\SC$ is the class of functions $\varphi:\R\to\C$ for which all derivatives decay faster than any rational function, i.e. $\varphi\in\SC$ if and only if for any $k,N\in\N_0$, $\sup_{x\in \R}\left(1+|x|\right)^N|\varphi^{\left(k\right)}\left(x\right)|<\infty$. The Fourier transform of $\varphi\in\SC$ is given by
\begin{equation*}
\hat \varphi\left(\xi\right)=\int_{\R} \varphi\left(x\right)e^{-2\pi i x \xi}\, dx,\quad \xi\in\R.
\end{equation*}
We will be interested in the $1$-periodization of $\varphi$ given by
\begin{equation*}
\Phi\left(\zeta_x\right)=\sum_{k\in\Z}\varphi\left(x-k\right),\quad x\in\R.
\end{equation*}
It is easy to see that if $N\ge 2$, then
\begin{equation}\label{Eq:PerDecay}
\sup_{\zeta_x\in\T}\left(1+|\zeta_x-1|\right)^N|\Phi\left(\zeta_x\right)|\lesssim \sup_{x\in \R}\left(1+|x|\right)^N|\varphi^{\left(k\right)}\left(x\right)|.
\end{equation}
Moreover $\hat \Phi \left(n\right)=\hat \varphi\left(n\right)$, $n\in\Z$.

\section{Tent spaces}\label{Sec:Tentspaces}
Given a subset $E\subset \T$, we define the ``tent'' over $E$ as
\begin{equation*}
T\left(E\right)=\left(\bigcup_{\zeta_x\notin E}\Gamma\left(\zeta_x\right)\right)^c.
\end{equation*}
Let $f:\D\to\C$ be a measurable function. For $q\in\left(0,\infty\right)$, we define the functional $A_q$ by
\begin{align*}
A_qf\left(x\right)=\left(\int_{\Gamma\left(x\right)}\frac{|f\left(z\right)|^q}{\left(1-|z|\right)^2}\, dA\left(z\right)\right)^{1/q},\quad x\in\T.
\end{align*}
Note that the functional $A_\infty$ was defined in the previous section. For $p,q\in\left(0,\infty\right)$, we say that $f\in T_{p,q}$ if $ \| f \|_{T_{p,q}}= \| A_{q}f \|_{L^p\left(\T\right)}<\infty $. For $q\in\left(0,\infty\right)$, we say that $f\in T_{\infty,q}$ if
\begin{equation*}
 \|f\|_{T_{\infty,q}}^q=\sup_{I\in\DC}\frac{1}{|I|}\int_{T\left(I\right)}\frac{|f\left(z\right)|^q}{\left(1-|z|\right)}\, dA\left(z\right)<\infty,
\end{equation*}
i.e. if $\frac{|f\left(z\right)|^q}{\left(1-|z|\right)}\, dA\left(z\right)$ is a Carleson measure.

For $p,q$ in the range discussed above, we define $T_{p,q}\left(\XC\right)$ to be the set of functions $f:\D\to \XC$ such that $ \|f\|_{\XC}\in T_{p,q}$. We equip this space with the obvious metric structure.

The main result of \cite{Cohn-Verbitsky2000:FactTentSpacesHankOps} is that $T_{p,q}=H^p\cdot T_{\infty,q}$. This result easily carries over to the $\XC$-valued setting, provided that we take care which one of the factors is $\XC$-valued:

\begin{theorem}\label{thm:TspaceFact}
	Let $\XC$ be a complex Banach space, and $0<p,q<\infty$. Then
	\begin{equation*}
	T_{p,q}\left(\XC\right) = H^p\cdot T_{\infty,q}\left(\XC\right),
	\end{equation*}
	and
	\begin{equation*}
	H^p\left(\XC\right)\cdot T_{\infty,q} \subset T_{p,q}\left(\XC\right).
	\end{equation*}
	If $\XC=\HC$ is an infinite-dimensional Hilbert space, then the inclusion is strict.
\end{theorem}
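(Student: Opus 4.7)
My plan is to prove the three assertions in turn. The first equality $T_{p,q}(\XC) = H^p \cdot T_{\infty,q}(\XC)$ splits into two inclusions. The easy direction $H^p \cdot T_{\infty,q}(\XC) \subset T_{p,q}(\XC)$ is obtained by passing to norms: if $f = h f_2$ with $h \in H^p$ scalar and $f_2 \in T_{\infty,q}(\XC)$, then $\|f(z)\|_{\XC} = |h(z)|\,\|f_2(z)\|_{\XC}$ is a product of a scalar $H^p$ function and a scalar $T_{\infty,q}$ function, and its $T_{p,q}$-norm is controlled by the scalar result of Cohn--Verbitsky. For the reverse inclusion, given $f \in T_{p,q}(\XC)$ I would apply the scalar factorization to the nonnegative function $z\mapsto \|f(z)\|_{\XC} \in T_{p,q}$; this produces an outer $h \in H^p$ and a nonnegative scalar $g \in T_{\infty,q}$ with $\|f(z)\|_{\XC} \le |h(z)| g(z)$ on $\D$ and $\|h\|_{H^p}\|g\|_{T_{\infty,q}} \lesssim \|f\|_{T_{p,q}(\XC)}$. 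Since $h$ is outer and therefore zero-free on $\D$, the ratio $f_2 := f/h$ is a well-defined $\XC$-valued analytic function satisfying $\|f_2(z)\|_{\XC} \le g(z)$, so $f_2 \in T_{\infty,q}(\XC)$ and $f = h \cdot f_2$ is the desired factorization.

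The inclusion $H^p(\XC)\cdot T_{\infty,q} \subset T_{p,q}(\XC)$, where the placement of vector-valuedness matters, again reduces to a scalar estimate by majorizing the $\XC$-norm with a scalar outer function. Given $h \in H^p(\XC)$, the Szeg\H{o}--Solomentsev argument recalled in Section \ref{Sec:Prelim} furnishes an outer $g \in H^p$ with $\|h(z)\|_{\XC} \le |g(z)|$ throughout $\D$ and $\|g\|_{H^p} \le \|h\|_{H^p(\XC)}$. Hence for any $f_2 \in T_{\infty,q}$,
\begin{equation*}
\|h(z) f_2(z)\|_{\XC} \le |g(z) f_2(z)|,
\end{equation*}
and the scalar inclusion $H^p \cdot T_{\infty,q} \subset T_{p,q}$ closes the argument.

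The main obstacle is the strictness in the infinite-dimensional Hilbert case, which I would handle indirectly via the tent-space characterisation of Triebel--Lizorkin spaces to be developed in Section \ref{Sec:Fspaces}. That characterisation identifies $F_{p,q}^s(\HC)$ with a suitably weighted version of $T_{p,q}(\HC)$ in such a way that any factorization $T_{p,q}(\HC) = H^p(\HC) \cdot T_{\infty,q}$ would descend to $F_{p,q}^s(\HC) = H^p(\HC) \cdot F_{\infty,q}^s$. Specialising to $p = 1$, $q = 2$, $s = -\alpha$, and replaying the duality manipulation used in the proof of Proposition \ref{prop:Application}, such an $F$-space factorization would make boundedness of $H_{\phi,\alpha}^*$ equivalent to $D^\alpha \phi \in BMOA(\HC)$, and hence equivalent to boundedness of $H_{\phi,\alpha}$---contradicting Proposition \ref{prop:HankForms}. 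The technical cost is therefore entirely in setting up the tent-space / $F$-space translation with enough precision to transport the factorization question, a task deferred to Section \ref{Sec:Fspaces}.
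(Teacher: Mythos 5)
Your argument matches the paper's essentially step for step: the factorization direction applies the scalar Cohn--Verbitsky result to $z\mapsto\|f(z)\|_{\XC}$ and divides by the zero-free outer factor, the two inclusions reduce to the scalar case by passing to norms (with the Szeg\H{o}--Solomentsev outer majorant supplying what the paper's ``follows similarly'' leaves implicit), and the strictness is obtained, exactly as in the paper, by observing that equality would transfer through the weighted tent-space description of $F_{p,q}^s(\HC)$ and contradict the Hankel-form result of Proposition \ref{prop:HankForms} via Theorem \ref{thm:FspaceProd}. No substantive deviation from the paper's proof, and no gap beyond the deferral to Section \ref{Sec:Fspaces} that the paper itself makes.
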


\begin{proof}
	If $f\in T_{p,q}\left(\XC\right)$, then by the scalar-valued result $ \|f\|_\XC=gH$, where $g\in H^p$ is outer analytic, $H\in T_{\infty,q}$, and $ \|g\|_{H^p} \|H\|_{T_{\infty,q}}\lesssim  \|f\|_{T_{p,q}\left(\XC\right)}$. Define $h=\frac{f}{g}$. Then $ \|h\|_\XC = |H| \in T_{\infty,q}$. By definition $h\in T_{\infty,q} \left(\XC\right)$, and $f=gh$. This proves that $T_{p,q}\left(\XC\right) \subset H^p\cdot T_{\infty,q}\left(\XC\right)$. The reverse inclusion $H^p\cdot T_{\infty,q}\left(\XC\right) \subset T_{p,q}\left(\XC\right)$ also follows from the scalar-valued result: Let $g\in H^p$ and $h\in T_{\infty,q}\left(\XC\right) $. Then
	\begin{equation*}	
	 \| gh \|_{ T_{p,q}\left(\XC \right)} 	=
	 \| g\|h \|_\XC \|_{ T_{p,q}} 	\lesssim  \| g \|_{H^p}  \|\, \|h \|_\XC\|_{ T_{\infty,q}} =  \| g \|_{H^p}  \| h\|_{ T_{\infty,q}\left(\XC\right)}. 
	\end{equation*}
	The statement that $H^p\left(\XC\right)\cdot T_{\infty,q} \subset T_{p,q}\left(\XC\right)$ follows similarly. For infinite-dimensional Hilbert spaces, this inclusion must be strict in order to not contradict Theorem \ref{thm:FspaceProd}.
\end{proof}

\section{Triebel-Lizorkin type spaces}\label{Sec:Fspaces}
The so called Triebel--Lizorkin spaces $F^s_{p,q}\left(\R^d\right)$, $s\in\R$, $0<p,q\le\infty$, are well-studied objects. An extensive treatment is given in Triebel's monographs \citelist{\cite{Triebel1983:TheoryOfFcnSpacesI}\cite{Triebel1992:TheoryOfFcnSpacesII}\cite{Triebel2006:TheoryOfFcnSpacesIII}}. We also mention papers by Liang, Sawano, Ullrich, Yang and Yuan \cite{Liang-Sawano-Ullrich-Yang-Yuan2012:CharBesov--Triebel--Lizorkin--HausdorffSpacesCoorbitsWavelets}, Peetre \cite{Peetre1975:OnSpacesOfTriebel--Lizorkin}, Rychkov \cite{Rychkov1999:OnThmOfBuiPaluszynskiTaibleson}\footnote{This paper contains a mistake, which is corrected in \cite{Ullrich2012:ContCharBesov--Triebel--LizorkinSpacesInterpretCoorbits}.}, and Ullrich \cite{Ullrich2012:ContCharBesov--Triebel--LizorkinSpacesInterpretCoorbits}, which give a more direct introduction to many of the ideas to be used in this paper. In this section we investigate Triebel--Lizorkin type spaces of $\XC$-valued analytic functions on $\D$. The more involved proofs are postponed to Subsection \ref{Ssec:pf}.

\begin{definition}\label{def:Fpqs}
	Let $0<p\le \infty$, $1\le q<\infty$, $s\in\R$ and $\alpha>s$. We define the Triebel--Lizorkin space $F_{p,q}^s\left(\D,\XC\right)$ as the space of functions $f\in\AC\left(\XC\right)$ such that
	\begin{equation*}
	 \|f\|_{F_{p,q}^s\left(\D,\XC\right)}:= \|\left(1-|z|\right)^{\alpha-s}D^\alpha f\|_{T_{p,q}\left(\XC\right)}<\infty.
	\end{equation*}
\end{definition}
The claim of this section is that the above definition is unambiguous, i.e. that it does not depend on the parameter $\alpha$. Moreover, the respective topologies defined for different choices of $\alpha$ are equivalent. If we for the moment accept this as a fact, then the proof of Theorem \ref{thm:FspaceFact} is indeed short:

\begin{proof}[Proof of Theorem \ref{thm:FspaceFact}]
	If $s<0$, then $f\in F_{p,q}^s\left(\D,\XC\right)$ if and only if $\left(1-|z|\right)^{-s}f\in T_{p,q}\left(\XC\right)$. By Theorem \ref{thm:TspaceFact}, this happens if and only if $f=gh$, where $g\in H^p$ is an outer function and $h=\left(1-|z|\right)^sH$, with $H\in T_{\infty,q}\left(\XC\right)$. Since $g$ is outer, $h\in\AC\left(\XC\right)$. Furthermore, $h\in F_{\infty,q}^s\left(\D,\XC\right)$ by definition.
\end{proof}

The remainder of this section is dedicated to the justification of Definition \ref{def:Fpqs}. In \citelist{\cite{Yang-Yuan2008:NewFcnSpacesTriebel--LizorkinSpacesQSpaces}\cite{Yang-Yuan2010:NewBesov-TypeTriebel-Lizorkin-TypeSpacesQSpaces}}, Yang and Yuan introduced the spaces $F^{s,\tau}_{p,q}\left(\R^d\right)$, $s\in\R$, $\tau\ge 0$, $p\in\left(0,\infty\right)$, $q\in\left(0,\infty\right]$. These include the standard Triebel--Lizorkin spaces: If $0<p<\infty$, then $F^{s,0}_{p,q}\left(\R^d\right)=F^{s}_{p,q}\left(\R^d\right)$, while $F^{s,1/p}_{p,q}\left(\R^d\right)=F^{s}_{\infty,q}\left(\R^d\right)$. In contrast to $F^{s}_{p,q}\left(\R^d\right)$, the spaces $F^{s,\tau}_{p,q}\left(\R^d\right)$ are not always distinct for different choices of parameters. On the other hand, they include for example the spaces $Q_\alpha\left(\R^d\right) = F^{\alpha,1/2-\alpha/d}_{2,2}\left(\R^d\right)$ introduced by Aulaskari, Xiao and Zhao \cite{Aulaskari-Xiao-Zhao1995:SubspacesSubsetsBMOAUBC}. This is in fact a motivation in \cite{Yang-Yuan2008:NewFcnSpacesTriebel--LizorkinSpacesQSpaces}. We chose to work with the more general scale of $F^{s,\tau}_{p,q}$-spaces, since this requires no additional effort.

A fundamental tool in the study of Triebel--Lizorkin spaces is the so-called Peetre maximal function: For $a>0$ and $f\in\AC\left(\XC\right)$, we define
\begin{equation}\label{Eq:Peetre}
f_{r,a}^*\left(\zeta_x\right)=\sup_{\zeta_y\in\T}\frac{ \|f_r\left(\zeta_y\right)\|_\XC}{\left(1+\frac{|\zeta_x-\zeta_y|}{1-r}\right)^a},\quad r \in\left[0,1\right),\, \zeta_x\in\T.
\end{equation}

\begin{definition}\label{def:Quasinorms}
	Let $\left(r_l\right)_{l\ge 0}$ be a sequence such that $0\le r_l<1$ and $2^l\left(1-r_l\right)\approx 1$. Furthermore, let $0<p,q<\infty$, $s\in \R$, $\tau\ge 0$ and $a>\max \left\{\frac{1}{p},\frac{1}{q}\right\}$. Define then following (quasi-)norms for $f\in\AC\left(\XC\right)$:
	\begin{align*}
	 \|f\|_1
	&=
	\sup_{I\in\DC\left(\T\right)}\frac{1}{|I|^\tau}\left(\int_{\zeta_x\in I}\left[\int_{\Gamma_I\left(\zeta_x\right)}\left(1-|w|\right)^{-2-sq} \|f\left(w\right)\|_\XC^q\, dA\left(w\right)\right]^{p/q} dx\right)^{1/p}.
	\\
	 \|f\|_2
	&=
	\sup_{I\in\DC\left(\T\right)}\frac{1}{|I|^\tau}\left(\int_{\zeta_x\in I}\left[\int_{r=1-|I|}^1\left(1-r\right)^{-1-sq} \|f_r\left(\zeta_x\right)\|_\XC^qdr\right]^{p/q} dx\right)^{1/p}.
	\\
	 \|f\|_3
	&=	
	\sup_{I\in\DC\left(\T\right)}\frac{1}{|I|^\tau}\left(\int_{\zeta_x\in I}\left[\sum_{l=\rk \left(I\right)}^\infty 2^{slq} \|f_l\left(\zeta_x\right)\|_\XC^q\right]^{p/q} dx\right)^{1/p}.
	\\
	 \|f\|_4
	&=
	\sup_{I\in\DC\left(\T\right)}\frac{1}{|I|^\tau}\left(\int_{\zeta_x\in I}\left[\sum_{l=\rk \left(I\right)}^\infty 2^{slq}f_{l,a}^*\left(\zeta_x\right)^q\right]^{p/q} dx\right)^{1/p}.
	\\
	 \|f\|_5
	&=
	\sup_{I\in\DC\left(\T\right)}\frac{1}{|I|^\tau}\left(\int_{\zeta_x\in I}\left[\int_{r=1-|I|}^1\left(1-r\right)^{-1-sq}f_{r,a}^*\left(\zeta_x\right)^q dr\right]^{p/q} dx\right)^{1/p}.
	\end{align*}
	If we wish to indicate the values of $p$, $q$, $s$ and $\tau$, then we use the notation $ \|f|_{p,q}^{s,\tau}\|_k$, $1\le k\le 5$. Similarly, $ \|f|\left(r_l\right)_{l\ge 0}\|_k$, $1\le k\le 5$, indicates the choice of $\left(r_l\right)_{l\ge 0}$.
\end{definition}

\begin{theorem}\label{thm:Norms}
	The (quasi-)norms in Definition \ref{def:Quasinorms} are comparable for $f\in\AC\left(\XC\right)$.
\end{theorem}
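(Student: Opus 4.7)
The plan is to establish a cyclic chain of comparisons
\[
\|f\|_3 \lesssim \|f\|_2 \lesssim \|f\|_1 \lesssim \|f\|_5 \lesssim \|f\|_4 \lesssim \|f\|_3,
\]
so that everything reduces to a single Peetre-type maximal inequality. The first three bounds are soft. For $\|f\|_3 \lesssim \|f\|_2$, I use that $\|f(\cdot)\|_\XC^q$ is subharmonic on $\D$ (cited in Section \ref{Sec:Prelim}); the mean value inequality on a ball of radius $c(1-r_l)$ around $r_l\zeta_x$ bounds $\|f_{r_l}(\zeta_x)\|_\XC^q$ by an average of $\|f_r(\zeta_x)\|_\XC^q$ over a slightly larger radial window, and the weight $(1-r)^{-1-sq}$ is essentially constant $\approx 2^{l(1+sq)}$ across that window. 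The bound $\|f\|_2 \lesssim \|f\|_1$ is the same subharmonicity estimate, combined with the geometric observation that a ball around $r\zeta_x$ fits inside a slightly enlarged $\Gamma_I(\zeta_x)$. Finally $\|f\|_1 \lesssim \|f\|_5$ follows by dominating $\|f_r(\zeta_y)\|_\XC$ for $\zeta_y$ in the arc of width $\lesssim 1-r$ around $\zeta_x$ by $f_{r,a}^*(\zeta_x)$, since the Peetre factor $(1+|\zeta_x-\zeta_y|/(1-r))^{-a}$ is bounded below there. The discretization $\|f\|_5 \lesssim \|f\|_4$ mirrors $\|f\|_3 \lesssim \|f\|_2$, applied to the Peetre maximal function in place of $\|f_r\|_\XC$.

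The crux is $\|f\|_4 \lesssim \|f\|_3$. Choose $t \in (0,\min\{p,q\})$ so small that $at > 1$; this is possible precisely because $a > \max\{1/p,1/q\}$. Subharmonicity of $\|f(\cdot)\|_\XC^t$ together with the Poisson decay \eqref{Eq:PoissonDecay} yields the pointwise bound
\[
f_{r_l,a}^*(\zeta_x)^t \;\lesssim\; M\!\left(\|f_{r_l'}\|_\XC^t\right)\!(\zeta_x),
\]
for a sample $r_l'$ comparable to $r_l$ (say $r_l' = r_{l+1}$): after writing $\|f_r(\zeta_y)\|_\XC^t$ as a ball average, the condition $at > 1$ makes the tail of the Peetre kernel summable and the total expression dominated by the Hardy--Littlewood maximal function. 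Plugging this estimate into the definition of $\|f\|_4$, raising to the power $1/t$, and invoking the periodic vector-valued Fefferman--Stein theorem (Theorem \ref{thm:Fefferman-Stein}) with $\beta = q/t > 1$ and $\gamma = p/t > 1$ removes the maximal operator and delivers $\|f\|_3$.

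The main obstacle is the case $\tau > 0$: Definition \ref{def:Quasinorms} takes a supremum of localized integrals over dyadic arcs $I$, whereas Fefferman--Stein is a global inequality. I expect to handle this by splitting $\|f_{r_l'}\|_\XC^t$ into the part supported on $2I$ and its complement. The near part is controlled by applying Theorem \ref{thm:Fefferman-Stein} on $2I$, costing only a bounded normalization factor. For the far part, $\zeta_x \in I$ and $\zeta_y$ in the $k$th dyadic dilate of $I$ force $|\zeta_x-\zeta_y|/(1-r_l) \gtrsim 2^k$ once $r_l$ is close enough to $1-|I|$, so the decay of the Peetre kernel lets one sum geometrically in $k$ and absorb the nonlocal contribution into the supremum defining $\|f\|_3$ taken over the larger dyadic arcs. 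This local--global bookkeeping, rather than the Peetre maximal estimate itself, is the delicate part of the argument.
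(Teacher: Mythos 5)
Your chain has the right architecture (reduce everything to one Peetre-type maximal estimate plus Fefferman--Stein), and at the crux it genuinely differs from the paper: the paper builds a Calder\'on reproducing formula from analytic Littlewood--Paley pieces and proves the multi-scale bound of Lemma \ref{Maximalcontrol} (with the bootstrap Lemma \ref{lemma2} for small exponents), whereas you propose a one-scale bound $f_{l,a}^*(\zeta_x)^t\lesssim M\left(\|f_{r_{l+1}}\|_\XC^t\right)(\zeta_x)$ using subharmonicity of $\|f\|_\XC^t$ for small $t$. That idea can be made to work, but not as stated. Subharmonicity majorizes $\|f_{r_l}\|_\XC^t$ by a \emph{Poisson} integral of $\|f_{r_{l+1}}\|_\XC^t$, and the Poisson kernel has only quadratic decay (see \eqref{Eq:PoissonDecay}, and the paper's explicit remark after \eqref{Eq:Decay} that replacing $\Phi_r$ by the Poisson kernel only gives $N=2$). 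After applying \eqref{Eq:ElemIneq} you need $\sup_u P_{r_l/r_{l+1}}(\zeta_u)\left(1+2^l|\zeta_u-1|\right)^{at}\lesssim 2^l$, which requires $at\le 2$, not merely $at>1$; for $at>2$ the far tails destroy the pointwise claim. The repair is cheap --- choose $t\in\left(1/a,\min\{p,q\}\right)$ with $at\le 2$, possible since $1/a<2/a$ --- but it must be stated, since it is exactly the point where the paper is forced into the rapidly decaying kernels and the sum over scales $m$. Your near/far bookkeeping for $\tau>0$ (split into $3I$ and dyadic translates $I_n$, use $at>1$ to sum the far terms, note that the translates are again dyadic) matches the paper's $A_l$/$B_{l,n}$ scheme and is fine in outline.

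The more serious problem is that the links you call soft are not. The mean value inequality for the subharmonic function $\|f(\cdot)\|_\XC^q$ gives a \emph{two-dimensional} ball average; it does not bound $\|f_{r_l}(\zeta_x)\|_\XC^q$ by an average of $\|f_r(\zeta_x)\|_\XC^q$ over a radial window at the same angle with a constant uniform in $f$. For instance $f(w)=\operatorname{sinc}\left(K\frac{\conj{\zeta_x}w-r_l}{1-r_l}\right)$ is analytic, equals $1$ at $r_l\zeta_x$, but its radial $q$-means over a window of width $\approx 1-r_l$ around $r_l$ are $o(1)$ as $K\to\infty$. So in $\|f\|_3\lesssim\|f\|_2$ (and in $\|f\|_5\lesssim\|f\|_4$, which you say mirrors it) the angular spread is unavoidable, and because the $l$/$r$ variable sits inside $[\,\cdot\,]^{p/q}$ and the $x$-integral is localized to a dyadic arc when $\tau>0$, removing it costs precisely the maximal-function, Fefferman--Stein and dyadic-tail machinery that the paper isolates as the first and second stability properties (Lemmas \ref{lemma:Stability1} and \ref{lemma:Stability2}; note the latter uses the rapidly decaying analytic kernel, not the Poisson kernel, exactly because $a$ may exceed $1$). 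Likewise $\|f\|_2\lesssim\|f\|_1$ via ball averages lands you in a widened-aperture variant of $\|f\|_1$, and you never address aperture independence. These gaps are all repairable within your framework: prove $\|f\|_3\lesssim\|f\|_1$ softly by disjoint balls inside $\Gamma_I(\zeta_x)$ (the paper's route), bound the widened-cone quantity by $\|f\|_5$ (your $\|f\|_1\lesssim\|f\|_5$ argument is aperture-insensitive), and run your crux estimate for all $r\in[r_l,r_{l+1}]$ so that $\|f\|_5\lesssim\|f\|_3$ comes out directly, making the $\|f\|_5\lesssim\|f\|_4$ link unnecessary. As written, however, the proof has genuine holes at these steps.
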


Let $\varphi\in\SC$ be a function such that for $\xi\ge 0$, $\hat \varphi\left(\xi\right)=e^{-\xi}$. With $\left(r_l\right)_{l\ge 0}$ as in definition \ref{def:Quasinorms}, let $t_l= \log\left(\frac{1}{r_l}\right)$, set $\varphi_l\left(x\right)=\frac{1}{t_l}\varphi\left(\frac{x}{r_l}\right)$, and let $\Phi_l$ denote the corresponding periodization. If $f\in\AC\left(\XC\right)$, then $\Phi_l\ast f=P_{r_l}\ast f$, and so
\begin{equation*}
 \|f\|_3
=	
\sup_{I\in\DC\left(\T\right)}\frac{1}{|I|^\tau}\left(\int_{\zeta_x\in I}\left[\sum_{l=\rk \left(I\right)}^\infty 2^{slq} \|\Phi_l\ast f\left(\zeta_x\right)\|_\XC^q\right]^{p/q}dx\right)^{1/p}.
\end{equation*}
This expression is a verbatim analogue of the defining (quasi-)norm for $F_{p,q}^{s,\tau}\left(\R^d\right)$. If $s<0$, then imposing finiteness of the above expression indeed gives us a space with the natural properties. However, for general $s\in\R$, such a definition would be severely flawed:
\begin{proposition}\label{prop:Flawed}
	With notation as in Definition \ref{def:Quasinorms}, and $s\ge 0$, if $f\in\AC\left(\XC\right)$ and $ \|f\|_3<\infty$, then $f\equiv 0$.
\end{proposition}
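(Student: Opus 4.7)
The plan is to test the supremum defining $ \|f\|_3$ on the single dyadic arc $I=\T$ (of rank $0$ and unit length). This choice reduces the hypothesis to the integral bound
\begin{equation*}
\int_\T \Bigl(\sum_{l=0}^\infty 2^{slq}  \|f\left(r_l\zeta_x\right)\|_\XC^q\Bigr)^{p/q}\, dx \le  \|f\|_3^p,
\end{equation*}
and retaining only the $l_0$-th term on the left yields the key pointwise-in-radius estimate
\begin{equation*}
 \|f\left(r_{l_0}\cdot\right)\|_{L^p\left(\T,\XC\right)}^p \le 2^{-sl_0p} \|f\|_3^p,\qquad l_0\ge 0.
\end{equation*}
Since $ \|f\|_\XC^p$ is subharmonic on $\D$, the quantity $r\mapsto  \|f\left(r\cdot\right)\|_{L^p\left(\T,\XC\right)}$ is non-decreasing in $r$. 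This monotonicity, combined with the displayed estimate, drives both cases.

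For $s>0$ the argument closes immediately: letting $l_0\to\infty$ makes the right-hand side vanish, and monotonicity forces $ \|f\left(r\cdot\right)\|_{L^p\left(\T,\XC\right)}=0$ for every $r\in\left[0,1\right)$. Hence $f$ vanishes on some circle in $\D$, and $f\equiv 0$ by the identity theorem.

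The endpoint $s=0$ is the only delicate case, and is where the main work lies. Now the second display merely gives $ \|f\|_{H^p\left(\XC\right)}\le  \|f\|_3$, so $f\in H^p\left(\XC\right)$; while the first display forces $\sum_l  \|f\left(r_l\zeta_x\right)\|_\XC^q<\infty$ for almost every $\zeta_x$, and in particular $f\left(r_l\zeta_x\right)\to 0$ in $\XC$ along the sequence $r_l\to 1$ for a.e.\ $\zeta_x$. I would then transfer the problem to the scalar theory: for every $\phi\in\XC^*$, the composition $\phi\circ f$ lies in scalar $H^p$ with $ \|\phi\circ f\|_{H^p}\le  \|\phi\|_{\XC^*} \|f\|_{H^p\left(\XC\right)}$, and by the a.e.\ pointwise vanishing along $r_l$ its non-tangential boundary values are zero a.e. The inner--outer factorization of scalar $H^p$ functions then yields $\phi\circ f\equiv 0$, and Hahn--Banach gives $f\equiv 0$. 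The main obstacle is precisely this endpoint: the $s>0$ case is a one-line extraction from the sum, whereas $s=0$ loses the summability information and one must combine the Hardy-space inclusion $f\in H^p\left(\XC\right)$ with scalar Hardy uniqueness, reduced from $\XC$ via duality, to conclude.
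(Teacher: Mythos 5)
Your proposal is correct, but it takes a different route from the paper, most visibly at the endpoint $s=0$. The paper gives one uniform argument for all $s\ge 0$: taking $I=\T$ and moving the integral inside the sum (Tonelli when $p=q$, with Hölder adjustments in the two cases $p>q$ and $p<q$), it bounds $\|f\|_3$ from below by $\bigl(\sum_{l}2^{slq}\int_\T\|f_{r_l}(\zeta_x)\|_\XC^q\,dx\bigr)^{1/q}$; since $\|f\|_\XC^q$ is subharmonic, the circle means $\int_\T\|f_{r_l}\|_\XC^q\,dx$ are non-decreasing in $l$, hence bounded below by a positive constant as soon as $f\not\equiv 0$, and because $2^{slq}\ge 1$ the series diverges for every $s\ge 0$ --- no boundary-value theory enters. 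Your single-term extraction recovers $s>0$ in one line, but at $s=0$ it discards the divergence mechanism, and you compensate with heavier machinery: $f\in H^p(\XC)$ from the extraction plus monotonicity, a.e.\ vanishing of $f(r_l\zeta_x)$ from summability of the inner series, reduction to scalar $H^p$ via functionals, the classical uniqueness fact that a nonzero $H^p$ function has $\log|g^*|\in L^1(\T)$ (so cannot have a.e.\ zero boundary values), and Hahn--Banach. Both arguments are sound; what the paper's buys is uniformity in $s$ and self-containedness (only the monotonicity of subharmonic means from Section 2 is used), at the cost of the $p$-versus-$q$ Hölder bookkeeping, while yours buys a one-line proof for $s>0$ and avoids that bookkeeping, at the cost of invoking classical Hardy-space boundary theory and duality precisely at the delicate endpoint.
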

\begin{proof}
	Assume for simplicity that $p=q$. Then, by interchanging orders of integration,
	\begin{equation*}
	 \|f\|_3
	\ge	
	\left(\sum_{l=0}^\infty 2^{slq}\int_{\zeta_x\in \T} \|f_{r_l}\left(\zeta_x\right)\|_\XC^q\, dx\right)^{1/q}.
	\end{equation*}
	By subharmonicity, the right-hand side is infinite, unless $f\equiv 0$. The general case follows in the same way, with some simple modifications: If $p>q$, the we first apply Hölder's inequality to the integral:
	\begin{equation*}
	\int_{\zeta_x\in \T}\sum_{l=0}^\infty 2^{slq} \|f_{r_l}\left(\zeta_x\right)\|_\XC^q\, dx
	\le
	\left(\int_{\zeta_x\in \T}\left[\sum_{l=0}^\infty 2^{slq} \|f_{r_l}\left(\zeta_x\right)\|_\XC^q\right]^{p/q}\, dx\right)^{q/p}.
	\end{equation*}
	If $p<q$, then we instead use Hölder's inequality on a partial sum:
	\begin{equation*}
	\sum_{l=0}^N 2^{slp} \|f_{r_l}\left(\zeta_x\right)\|_\XC^p\le 
	\left(1+N\right)^{\frac{q-p}{q}}\left[\sum_{l=0}^N 2^{slq} \|f_{r_l}\left(\zeta_x\right)\|_\XC^q\right]^{p/q}.
	\end{equation*}
	Now integrate over $\T$, let $N\to \infty$, and argue by subharmonicity.
\end{proof}
A related observation is that if $s<0$ and $f\in\AC\left(\XC\right)$, then 
\begin{equation*}
\|f\left(w\right)\|_\XC\lesssim \|f\|_1\left(1-|w|\right)^{s+\tau-1/p},
\end{equation*}
cf. Lemma \ref{lemma:RadialGrowth}. So if $s+\tau-\frac{1}{p}>0$ and $ \|f\|_1<\infty$, then $f\equiv 0$ by the maximum principle for analytic functions. This motivates us to impose the restriction $\tau\le \frac{1}{p}$. The definition of $F_{p,q}^{s,\tau}\left(\D,\XC\right)$ is now inspired by the so-called lifting property $D^\alpha:F_{p,q}^s\left(\R^d\right)\to F_{p,q}^{s-\alpha}\left(\R^d\right)$.

\begin{definition}\label{def:Fpqst}
	Let $0<p,q<\infty$, $s\in\R$, $0\le\tau\le \frac{1}{p}$, and $\alpha>s$. We define the Triebel--Lizorkin type space $F_{p,q}^{s,\tau}\left(\D,\XC\right)$ as the space of functions $f\in\AC\left(\XC\right)$ such that
	\begin{equation*}
	 \|f\|_{F_{p,q}^{s,\tau}\left(\D,\XC\right)}:= \|D^\alpha f|_{p,q}^{s-\alpha,\tau}\|_1<\infty.
	\end{equation*}
\end{definition}

Note that if $0<p<\infty$, then $ \|f|_{p,q}^{s,0}\|_1 =  \|\left(1-|z|\right)^{-s} f\|_{T_{p,q}\left(\XC\right)}$ and $ \|f|_{q,q}^{s,1/q}\|_1\approx  \|\left(1-|z|\right)^{-s} f\|_{T_{\infty,q}\left(\XC\right)}$. Consequently $F_{p,q}^{s,0}\left(\D,\XC\right)=F_{p,q}^{s}\left(\D,\XC\right)$ and $F_{q,q}^{s,1/q}\left(\D,\XC\right)=F_{\infty,q}^{s}\left(\D,\XC\right)$. Definition \ref{def:Fpqst} is justified by the following lemma:
\begin{lemma}\label{lemma:HalfLifting} 
	Let $0<p,q<\infty$, $s\in\R$, $0\le\tau\le \frac{1}{p}$, and $\alpha>s$. Then	
	\begin{equation}\label{eq:HalfLifting}
	 \|D^\alpha f|_{p,q}^{s-\alpha,\tau}\|_3\lesssim  \|f|_{p,q}^{s,\tau}\|_4.
	\end{equation}
\end{lemma}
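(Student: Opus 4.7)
The plan is to establish the pointwise bound
\[
\|(D^\alpha f)_{r_l}(\zeta_x)\|_\XC \lesssim 2^{l\alpha}\, f^*_{l+k, a}(\zeta_x), \qquad l\ge 0,
\]
for a fixed integer $k\ge 1$ depending only on $(r_l)$; substituting this into $\|D^\alpha f|_{p,q}^{s-\alpha,\tau}\|_3$, the weight $2^{(s-\alpha)lq}$ exactly absorbs $2^{l\alpha q}$, and a reindexing $m=l+k$ (which starts the sum at $m=\rk(I)+k\ge \rk(I)$) bounds it by the sum defining $\|f|_{p,q}^{s,\tau}\|_4$, after which the usual $L^p(\zeta_x \in I)$-averaging, division by $|I|^\tau$, and supremum over $I \in \DC(\T)$ finish the argument.

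To produce the pointwise bound, I would fix such a $k$ and set $\sigma = r_{l+k}$, $\mu = r_l/\sigma\in(0,1)$, and $\tau = -\log\mu\approx 2^{-l}$, so that also $1-\sigma\approx 2^{-l}$. Since $f_\sigma$ extends analytically to a disc strictly larger than $\overbar\D$, Cauchy's formula together with term-by-term summation yields the representation
\[
(D^\alpha f)_{r_l}(\zeta_x) = D^\alpha(f_\sigma)(\mu\zeta_x) = \int_\T f_\sigma(\zeta_y)\, K_\mu(\zeta_{x-y})\, dy, \qquad K_\mu(\zeta_z) := \sum_{n=0}^\infty (1+n)^\alpha\mu^n\zeta_z^n.
\]
The technical heart is the kernel estimate
\[
|K_\mu(\zeta_z)| \lesssim \frac{2^{l(\alpha+1)}}{(1+2^l|\zeta_z-1|)^N}
\]
for arbitrary fixed $N$. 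This follows from two ingredients: first, the uniform bound $|K_\mu(\zeta_z)| \le \sum_n(1+n)^\alpha\mu^n \lesssim \tau^{-\alpha-1}$, by comparing the series to the integral $\int_0^\infty (1+x)^\alpha e^{-\tau x}\,dx$; second, a decay bound obtained by applying the operator $(1-\mu\zeta_z)^M$ to the series. Since $(1-\mu\zeta_z)^M K_\mu(\zeta_z) = \sum_n \Delta^M[(1+n)^\alpha]\,\mu^n\zeta_z^n$, where $\Delta$ is the backward difference, and $|\Delta^M[(1+n)^\alpha]|\lesssim (1+n)^{\alpha-M}$, the transformed series is uniformly bounded for $M>\alpha+1$, giving $|K_\mu(\zeta_z)| \lesssim |1-\mu\zeta_z|^{-M}\lesssim (\tau+|\zeta_z-1|)^{-M}$. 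Combining with the uniform bound produces the stated decay with $N=M-\alpha-1$, which is arbitrary.

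Given this kernel estimate, the Peetre majorization $\|f_\sigma(\zeta_y)\|_\XC\le f^*_{\sigma,a}(\zeta_x)(1+|\zeta_{x-y}-1|/(1-\sigma))^a$ inserted into the integral representation, together with the choice $N>a+1$, yields
\[
\|(D^\alpha f)_{r_l}(\zeta_x)\|_\XC \le f^*_{\sigma,a}(\zeta_x)\int_\T (1+2^l|\zeta_z-1|)^a |K_\mu(\zeta_z)|\,dz \lesssim 2^{l\alpha}\, f^*_{l+k,a}(\zeta_x),
\]
which is the desired pointwise inequality; substituting and reindexing as in the first paragraph closes the proof.

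The principal obstacle is the kernel bound with arbitrary polynomial decay. The naive estimate $|K_\mu(\zeta_z)|\lesssim |1-\mu\zeta_z|^{-\alpha-1}$ only yields decay of order $\alpha+1$, which is insufficient when $\alpha$ is close to or smaller than $a$ (recall that $a>\max(1/p,1/q)$ is essentially unconstrained while $\alpha$ is only required to exceed $s$). The difference-operator trick sketched above promotes this to arbitrary decay $N$, but requires a short careful induction to verify the estimate $|\Delta^M(1+n)^\alpha|\lesssim (1+n)^{\alpha-M}$ uniformly in $n\ge 0$ for real $\alpha$ and integer $M$. An alternative, essentially equivalent, route would be to express $K_\mu$ as the $1$-periodization of a rescaled Schwartz molecule and invoke \eqref{Eq:PerDecay}; the symbolic calculus is slightly more delicate there because $(1+n)^\alpha$ is not an exact dilation of a single smooth symbol in $\tau n$.
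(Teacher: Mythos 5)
Your plan---a single pointwise estimate $\|(D^\alpha f)_{r_l}(\zeta_x)\|_\XC\lesssim 2^{l\alpha}f^*_{l+k,a}(\zeta_x)$ followed by reindexing and the $L^p$-averaging---is exactly the first step of the paper's proof, but your key kernel bound is false in the very range where you need it, so there is a genuine gap. The difference-operator computation itself is fine: $|\Delta^M[(1+n)^\alpha]|\lesssim(1+n)^{\alpha-M}$ for $n\ge M$ (and $O(1)$ for $n<M$) gives $|K_\mu(\zeta_z)|\lesssim|1-\mu\zeta_z|^{-M}$ for $M>\alpha+1$. But since $|1-\mu\zeta_z|\le 2$, increasing $M$ only \emph{weakens} this bound, and it cannot be combined with the size bound $\approx 2^{l(\alpha+1)}$ to yield $|K_\mu(\zeta_z)|\lesssim 2^{l(\alpha+1)}(1+2^l|\zeta_z-1|)^{-N}$ with $N>\alpha+1$. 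That inequality is simply false: for $\alpha=1$ one has $K_\mu(\zeta_z)=(1-\mu\zeta_z)^{-2}$ exactly, so at fixed distance $|\zeta_z-1|=\epsilon$ the kernel is $\approx\epsilon^{-\alpha-1}$ uniformly in $l$, whereas your bound is $2^{l(\alpha+1-N)}\epsilon^{-N}\to 0$ as $l\to\infty$; the same obstruction occurs for every real $\alpha$, because the singularity of $\sum(1+n)^\alpha w^n$ at $w=1$ has order exactly $\alpha+1$ (up to a logarithm at negative integers), so the decay away from $\zeta_z=1$ saturates at $\alpha+1$ and cannot be promoted. (Your interpolation $\min(A,B)\le A^{1-\theta}B^{\theta}$ cannot rescue it either: matching exponents forces $M=\alpha+1$.) The failure is quantitative, not cosmetic: with the true kernel size $\min\bigl(2^{l(\alpha+1)},|\zeta_z-1|^{-\alpha-1}\bigr)$ the integral $\int_\T(1+2^l|\zeta_z-1|)^a|K_\mu(\zeta_z)|\,dz$ is of order $2^{la}$ rather than $2^{l\alpha}$ as soon as $a>\alpha$, and since $a>\max\{1/p,1/q\}$ is large while $\alpha>s$ may be small or negative, the direct convolution estimate can only ever prove the lemma for $\alpha$ large compared to $a$.

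This is precisely why the paper's proof has two separate mechanisms. For $\alpha\ge a+2$ it runs your argument, but with the kernel $\Phi_l^{(\alpha)}$ whose symbol $(1+\xi)^\alpha e^{-2^{-l}\xi}$ is cut off at $\xi=-1$; that symbol has roughly $\alpha$ derivatives across the cut, so decay of order $N$ is available for $N<\alpha$, and $a+1<N<\alpha$ can be chosen. For general $\alpha>s$ it factors $D^\alpha=D^{-N}\circ D^{N+\alpha}$ with $N$ large, and proves boundedness of $D^{-1}\colon F^{s,\tau}_{p,q}\to F^{s+1,\tau}_{p,q}$ by a completely different route that uses no kernel decay at all: the radial representation $(D^{-1}f)(r\zeta_x)=\frac{1}{r}\int_0^r f(\rho\zeta_x)\,d\rho$, Flett's form of Hardy's inequality (Lemma \ref{lemma:HardyIneq}), and the growth estimate of Lemma \ref{lemma:RadialGrowth}. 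To repair your write-up you need this reduction (or some other argument for small and negative $\alpha$); no choice of $M$, $N$ or $k$ in your kernel estimate will close the gap.
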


By Proposition \ref{prop:Flawed}, the above lemma is trivial if $s\ge 0$. Assume therefor that $s<0$, and let $\alpha>s$. In particular \eqref{eq:HalfLifting} holds. Moreover, $-\alpha >s-\alpha$. By another application of Lemma \ref{lemma:HalfLifting}, we obtain
\begin{equation*}
 \|f|_{p,q}^{s,\tau}\|_3= \|D^{-\alpha}\left(D^\alpha f\right)|_{p,q}^{s-\alpha -\left(-\alpha\right),\tau}\|_3\lesssim  \|D^\alpha f|_{p,q}^{s-\alpha ,\tau}\|_4.
\end{equation*}
Combined with Theorem \ref{thm:Norms}, this yields that
\begin{equation*}
 \|f|_{p,q}^{s,\tau}\|_1\lesssim  \|D^\alpha f|_{p,q}^{s-\alpha,\tau}\|_1\lesssim  \|f|_{p,q}^{s,\tau}\|_1,\quad f\in\AC\left(\XC\right),
\end{equation*}
whenever $s<0$ and $s-\alpha < 0$. This implies that if $\alpha_1,\alpha_2 > s$, then 
\begin{equation*}
 \|D^{\alpha_1} f|_{p,q}^{s-\alpha_1,\tau}\|_1\approx  \|D^{\alpha_2} f|_{p,q}^{s-\alpha_2,\tau}\|_1,
\end{equation*}
and thus $F_{p,q}^{s,\tau}\left(\D,\XC\right)$ is well-defined, with topology independent of $\alpha$.

\subsection{Proofs}\label{Ssec:pf}
In \ref{Sssec:Stability1} we quantify the rigidity of analytic functions in a certain way (Lemma \ref{lemma:Stability1}). We refer to this as ``the first stability property''. This will imply that $ \|f\|_3$ is essentially independent of $\left(r_l\right)_{l\ge 0}$, and also that $ \|f\|_1\lesssim  \|f\|_2\lesssim  \|f\|_3$. The proof that $ \|f\|_3\lesssim  \|f\|_1$ is simpler. In \ref{Sssec:Stability2}, we deduce a similar stability property (the second) for the Peetre maximal function (Lemma \ref{lemma:Stability2}). It follows that $ \|f\|_4\approx  \|f\|_5$. The estimate $ \|f\|_3\lesssim  \|f\|_4$ is trivial, since $f_l\left(\zeta_x\right)\le f_{l,a}^*\left(\zeta_x\right)$. We dedicate \ref{Sssec:ReverseMaximal} to obtaining the reverse maximal control, the most involved part of this paper. In \ref{Sssec:HalfLifting} we prove Lemma \ref{lemma:HalfLifting}.

\subsubsection{The first stability property}\label{Sssec:Stability1}
Given $I\in\DC\left(\T\right)$, we use the notation $I_n=I+n|I|$, for $1-\frac{1}{2|I|}\le n\le \frac{1}{2|I|}$. For other $n\in\Z$, we let $I_n$ be the empty set. Furthermore, we set $\IC_L=\cup_{|n|\le L}I_n$. 

\begin{lemma}\label{lemma:Stability1}
	Let $\XC$ be a Banach space, $\alpha\ge 0$ and $c_1,c_2,c_3,c_4>0$. Then there exists constants $K>0$ and $L\in\N$ with the following property:
	
	Let $I\in\DC\left(\T\right)$. If $r,r'\in\left[0,1\right)$ satisfy 
	\begin{equation*}
	\left(1-r\right)\le c_1\left(r'-r\right)\le c_2\left(1-\frac{r}{r'}\right)\le c_3\left(1-r\right)\le c_4 |I|,
	\end{equation*}
	i.e.
	\begin{equation*}
	\left(r'-r\right)\approx \left(1-\frac{r}{r'}\right)\approx \left(1-r\right)\lesssim |I|,
	\end{equation*}
	and $\zeta_x'\in\T$ satisfies $|\zeta_x-\zeta_x'|\le \alpha \left(1-r\right)$, where $\zeta_x\in I$, then for all $\delta >0$, for all $f\in\AC\left(\XC\right)$, it holds that
	\begin{equation}\label{Eq:Stability1}
	 \|f_r\left(\zeta_x'\right)\|_\XC^\delta \le K \left( M\left(\1_{\IC_L}  \|f_{r'}\|_\XC^\delta\right)\left(\zeta_x\right)+\sum_{|n|> L}\frac{1}{\left(|n|-L\right)^2|I|}\int_{I_n} \|f_{r'}\left(\zeta_y\right)\|_\XC^\delta \, dy\right).
	\end{equation}
\end{lemma}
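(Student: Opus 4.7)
The plan is to exploit that $\|f(\cdot)\|_\XC^\delta$ is subharmonic on $\D$ for every $\delta>0$ (as recalled in Section~\ref{Sec:Prelim}). First I would apply the Poisson majorization for subharmonic functions to $w\mapsto \|f(r'w)\|_\XC^\delta$, which is subharmonic on a neighborhood of $\overline{\D}$; evaluating at $w=(r/r')\zeta_{x'}$ yields
\[
\|f_r(\zeta_{x'})\|_\XC^\delta \;\le\; \int_{\T} \|f_{r'}(\zeta_y)\|_\XC^\delta\, P_{r/r'}(\zeta_{x'-y})\,dy.
\]
The task then reduces to controlling this Poisson convolution after splitting $\T=\IC_L\cup\bigcup_{|n|>L}I_n$.

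For the tail, I would fix $L\in\N$ large enough, depending only on $\alpha$ and $c_1,\ldots,c_4$, so that whenever $\zeta_y\in I_n$ with $|n|>L$ and $\zeta_x\in I$, the tangential shift $|\zeta_x-\zeta_{x'}|\le \alpha(1-r)\le \alpha c_4|I|$ combined with the triangle inequality forces $|\zeta_{x'}-\zeta_y|\gtrsim (|n|-L)|I|$. Using the hypothesis $1-r/r'\approx 1-r\le c_4|I|$, the decay estimate~\eqref{Eq:PoissonDecay} then produces
\[
P_{r/r'}(\zeta_{x'-y})\;\lesssim\; \frac{1-r}{\bigl((|n|-L)|I|\bigr)^2}\;\lesssim\; \frac{1}{(|n|-L)^2|I|},
\]
and integrating in $\zeta_y$ over $I_n$ followed by summing over $|n|>L$ reproduces the tail sum appearing in~\eqref{Eq:Stability1}.

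For the near part, the same decay estimate together with $|\zeta_x-\zeta_{x'}|\lesssim 1-r\approx 1-r/r'$ yields the translation comparison $P_{r/r'}(\zeta_{x'-y})\lesssim P_{r/r'}(\zeta_{x-y})$, uniformly in $y$, via a short case analysis distinguishing $|\zeta_x-\zeta_y|\le 2|\zeta_x-\zeta_{x'}|$ from its complement. The $\IC_L$-contribution is then dominated by the Poisson extension of $\1_{\IC_L}\|f_{r'}\|_\XC^\delta$ evaluated at $(r/r')\zeta_x$, which by the standard pointwise majorization recalled in Section~\ref{Sec:Prelim} is at most a constant multiple of $M(\1_{\IC_L}\|f_{r'}\|_\XC^\delta)(\zeta_x)$. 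All constants produced depend only on $\alpha$ and $c_1,\ldots,c_4$; in particular independence of $\delta$ is automatic, since subharmonicity of $\|f\|_\XC^\delta$ is available for every $\delta>0$ with no $\delta$-dependent constant. The main bookkeeping difficulty is absorbing the tangential shift $|\zeta_x-\zeta_{x'}|$ into both the tail distance bound and the Poisson kernel comparison, but once $L$ is chosen large enough in terms of the given parameters these become elementary triangle-inequality estimates.
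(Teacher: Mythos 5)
Your proposal is correct and follows essentially the same route as the paper: subharmonic majorization by the Poisson kernel $P_{r/r'}$, a split of $\T$ into $\IC_L$ and the arcs $I_n$ with $|n|>L$, the decay estimate \eqref{Eq:PoissonDecay} for the tail after choosing $L$ large in terms of $\alpha$ and $c_1,\dots,c_4$, and maximal-function control of the near part. The only cosmetic difference is that you justify the near-part bound by an explicit kernel translation comparison $P_{r/r'}(\zeta_{x'-y})\lesssim P_{r/r'}(\zeta_{x-y})$, whereas the paper invokes the (equivalent) non-tangential maximal control of Poisson extensions at the point $\tfrac{r}{r'}\zeta_{x'}$.
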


\begin{proof}	
	By subharmonicity it holds that for any $L\in\N$
	\begin{align*}
	 \|f_{r}\left(\zeta_x'\right)\|_\XC^{\delta}\le {} & \int_\T P_{\frac{r}{r'}}\left(\zeta_{x'-y}\right) \|f_{r'}\left(\zeta_y\right)\|_\XC^\delta\, dy
	\\
	= {} &
	\int_{\IC_L} P_{\frac{r}{r'}}\left(\zeta_{x'-y}\right) \|f_{r'}\left(\zeta_y\right)\|_\XC^\delta\, dy	
	+
	\sum_{|n|> L}\int_{I_n} P_{\frac{r}{r'}}\left(\zeta_{x'-y}\right) \|f_{r'}\left(\zeta_y\right)\|_\XC^\delta\, dy.
	\end{align*}
	By the non-tangential maximal control of Poisson extensions,
	\begin{equation*}
	\int_{\IC_L} P_{\frac{r}{r'}}\left(\zeta_{x'-y}\right) \|f_{r'}\left(\zeta_y\right)\|_\XC^\delta\,  dy=\PC\left[\1_{\IC_L} \|f_{r'}\|_\XC^\delta\right]\left(\zeta_x'\right)\lesssim M\left(\1_{\IC_L} \|f_{r'}\|_\XC^\delta\right)\left(\zeta_x\right).
	\end{equation*}
	Provided that $L$ is sufficiently big, it follows from \eqref{Eq:PoissonDecay} that
	\begin{equation*}
	P_{\frac{r_l}{r_l'}}\left(\zeta_{x'-y}\right)\lesssim \frac{1}{\left(|n|-L\right)^2|I|},\quad \zeta_y\in I_n,|n|>L.
	\end{equation*}
\end{proof}

\begin{proof}[$ \|f\|_3$ is independent of $\left(r_l\right)_{l\ge 0}$:]
Let $\left(r_l\right)_{l\ge 0}$ and $\left(r_l'\right)_{l\ge 0}$ be sequences in $\left[0,1\right)$ such that
\begin{equation*}
2^{-l}c\le 1-r_l\le 2^{-l}C,\quad\textnormal{and}\quad 2^{-l}c'\le 1-r_l'\le 2^{-l}C'.
\end{equation*}
Chose $M\in\N$ such that $2^{M-1}c\ge C'$. For any $l\in\N_0$ it then holds that $r_l< r_{l+M}'$, and $2^{-1-l}c\le r_{l+M}'-r_l\le 2^{-l}C$. If we want to prove that $ \|f|\left(r_l\right)_{l\ge 0}\|_1\lesssim  \|f|\left(r_l'\right)_{l\ge 0}\|_1$, then by a shift of index we may assume that $M=0$. In this case our sequence has the asymptotic behavior
\begin{equation}\label{Eq:Asymptotic}
\left(r_l'-r_l\right)\approx \left(1-\frac{r_l}{r_l'}\right)\approx \left(1-r_l\right)\approx 2^{-l}.
\end{equation}
Let $\delta<\min\left\{p,q\right\}$ and $I\in\DC\left(\T\right)$. It follows from Lemma \ref{lemma:Stability1} (with $\alpha=0$) that
\begin{equation*}
 \|f_l\left(\zeta_x\right)\|_\XC^{\delta}\lesssim \underbrace{M\left(\1_{\IC_L} \|f_{l'}\|_\XC^\delta\right)\left(\zeta_x\right)}_{=:A_l}
+\sum_{|n|>L}\frac{1}{\left(|n|-L\right)^2}\underbrace{\frac{1}{|I|}\int_{I_n}  \|f_{l'}\left(\zeta_y\right)\|_\XC^\delta \, dy}_{=:B_{l,n}},
\end{equation*}
for $l\ge \rk\left(I\right)$ and $x\in I$. We now exploit the fact that $p/q=\delta/q\cdot p/\delta$. By Minkowski's inequality,
\begin{equation}\label{Eq:KeyEst1}
\begin{split}
\left(\int_I\left[\sum_{l=\rk \left(I\right)}^\infty 2^{slq} \|f_l\left(\zeta_x\right)\|_\XC^{q/\delta}\right]^{p/q}  dx\right)^{\delta/p}
\lesssim {} 
\left(\int_I\left[\sum_{l=\rk \left(I\right)}^\infty 2^{slq}A_l^{q/\delta}\right]^{p/q} dx\right)^{\delta/p}
\\
+
\sum_{|n|>L}\frac{1}{\left(|n|-L\right)^2}\left(\int_I\left[\sum_{l=\rk \left(I\right)}^\infty 2^{slq}B_{l,n}^{q/\delta}\right]^{p/q} dx\right)^{\delta/p}.
\end{split}
\end{equation}
By Theorem \ref{thm:Fefferman-Stein},
\begin{align*}
\left(\int_I\left[\sum_{l=\rk \left(I\right)}^\infty 2^{slq}A_l^{q/\delta}\right]^{p/q} dx\right)^{\delta /p}
&=
 \|\left(\sum_{l=\rk\left(I\right)}^\infty \left(M\left(2^{sl\delta}\1_{\IC_L}\| f_{l'} \|_\XC^\delta\right)\right)^{q/\delta}\right)^{\delta/q}\|_{L^{p/\delta}}
\\
&\lesssim
 \|\left(\sum_{l=\rk\left(I\right)}^\infty 2^{slq}\1_{\IC_L}\|f_{l'} \|_\XC^q\right)^{1/q}\|_{L^p}^\delta
\\
&\lesssim
\left(\sum_{|n|\le L} \int_{I_n} \left[\sum_{l=\rk \left(I\right)}^\infty 2^{slq} \|f_{l'}\left(\zeta_y\right)\|_\XC^q\right]^{p/q} dy\right)^{\delta/p}
\\
&\le
|I|^{\delta \tau} \|f|\left(r_l'\right)_{l\ge 0}\|_{3}^{\delta }.
\end{align*}
By Minkowski's and Jensen's inequalities
\begin{align*}
&\left(\int_I\left[\sum_{l=\rk \left(I\right)}^\infty 2^{slq}B_{l,n}^{q/\delta}\right]^{p/q}dx\right)^{\delta/p}
\\
&=
\left(\int_I\left(\left[\sum_{l=\rk \left(I\right)}^\infty \left(\frac{2^{sl\delta}}{|I|}\int_{I_n}  \|f_{r_l'}\left(\zeta_y\right)\|_\XC^\delta\, dy \right)^{q/\delta}\right]^{\delta/q}\right)^{p/\delta}dx\right)^{\delta/p}
\\
&\le
\left(\int_I\left(\frac{1}{|I|}\int_{I_n}\left[\sum_{l=\rk \left(I\right)}^\infty 2^{slq} \|f_{r_l'}\left(\zeta_y\right)\|_\XC^q \right]^{\delta/q}dy\right)^{p/\delta}dx\right)^{\delta/p}
\\
&\le
\left(\int_I\frac{1}{|I|}\int_{I_n}\left[\sum_{l=\rk \left(I\right)}^\infty 2^{slq} \|f_{r_l'}\left(\zeta_y\right)\|_\XC^q \right]^{p/q}dy\, dx\right)^{\delta/p}
\\
&\le |I|^{\delta \tau} \|f|\left(r_l'\right)_{l\ge 0}\|_3^\delta.
\end{align*}
Since $\sum_{|n|>L}\frac{1}{\left(|n|-L\right)^2}<\infty$, this proves that $ \|f|\left(r_l\right)_{l\ge 0}\|_\lesssim  \|f|\left(r_l'\right)_{l\ge 0}\|_3$.
\end{proof}

\begin{proof}[$ \|f\|_1\lesssim  \|f\|_2$:]
Let $I\in\DC\left(\T\right)$. Obviously,
\begin{multline*}
\int_{\Gamma_I\left(x\right)}\left(1-|w|\right)^{-2-sq} \|f\left(w\right)\|_\XC^q\, dA\left(w\right)
\\
\le
\int_{r=1-|I|}^1\left(1-r\right)^{-2-sq}\int_{|\zeta_x-\zeta_y|\le \theta_r} \|f_r\left(\zeta_y\right)\|_\XC^q\, dy\, dr,
\end{multline*}
where $\theta_r\approx 1-r$. Let $r'=\frac{1+r}{2}$. By Lemma \ref{lemma:Stability1},
\begin{equation*}
 \|f_r\left(\zeta_x'\right)\|_\XC^\delta \lesssim  M\left(\1_{\IC_L}  \|f_{r'}\|_\XC^\delta\right)\left(\zeta_x\right)+\sum_{|n|> L}\frac{1}{\left(|n|-L\right)^2}\frac{1}{|I|}\int_{I_n} \|f_{r'}\left(\zeta_y\right)\|_\XC^\delta\, dy,
\end{equation*}
for $|\zeta_x'-\zeta_x|\le \theta_r$. It follows that
\begin{align*}
\int_{|\zeta_x-\zeta_y|\le \theta_r} \|f_r\left(\zeta_y\right)\|_\XC^q\, dy
\lesssim
\left(1-r\right)\left[A_{r'}+\sum_{|n|> L}\frac{1}{\left(|n|-L\right)^2}B_{r',n}\right]^{q/\delta},
\end{align*}
where
\begin{equation*}
A_r:=M\left(\1_{\IC_L}  \|f_{r}\|_\XC^\delta\right)\left(\zeta_x\right),\quad\textnormal{and}\quad 
B_{r,n}:=\frac{1}{|I|}\int_{I_n} \|f_{r}\left(\zeta_y\right)\|_\XC^\delta\, dy.
\end{equation*}
By Minkowski's inequality, and the change of variables $r\mapsto r'$,
\begin{multline*}
\left(\int_I\left[\int_{\Gamma_I\left(\zeta_x\right)}\left(1-|w|\right)^{-2-sq} \|f\left(w\right)\|_\XC^q\, dA\left(w\right)\right]^{p/q} dx\right)^{\delta/p}
\\
\lesssim {} 
\left(\int_I\left[\int_{r=1-\frac{1}{2|I|}} \left(1-r\right)^{-1-sq} A_r^{q/\delta}dr\right]^{p/q} dx\right)^{\delta/p}
\\
+
\sum_{|n|>L}\frac{1}{\left(|n|-L\right)^2}\left(\int_I\left[\int_{r=1-\frac{|I|}{2}} \left(1-r\right)^{-1-sq} B_{r,n}^{q/\delta}dr\right]^{p/q} dx\right)^{\delta/p}.
\end{multline*}
To complete the proof, we now proceed as from \eqref{Eq:KeyEst1}, with the obvious modifications.
\end{proof}

\begin{proof}[$ \|f\|_2\lesssim  \|f\|_3$:]
	It will prove convenient to work with $r_l=1-2^{-l}$. For any $l\in N_0$ and $r\in \left[r_{l},r_{l+1}\right]$ it holds that 
	\begin{equation*}
	\left(r_{l+2}-r\right)\approx \left(1-\frac{r}{r_{l+2}}\right)\approx \left(1-r\right)\approx 2^{-l}.
	\end{equation*}
	By Lemma \ref{lemma:Stability1}, it follows that
	\begin{equation*}
	 \|f_r\left(\zeta_x\right)\|_\XC^\delta \lesssim M\left(\1_{\IC_L} \|f_{r_{l+2}}\|_\XC^\delta\right)\left(\zeta_x\right) + \sum_{|n|> L}\frac{1}{\left(|n|-L\right)^2}\frac{1}{|I|}\int_{I_n} \|f_{r_{l+2}}\left(\zeta_y\right)\|_\XC^\delta\, dy,
	\end{equation*}
	for $I\in\DC\left(\T\right)$, $x\in I$, $l\ge \rk\left(I\right)$ and $r\in\left[r_{l},r_{l+1}\right]$. Since $\int_{r_{l+1}}^{r_{l+2}} \left(1-r\right)^{-1-sq}\, dr\approx 2^{slq}$, it follows that
	\begin{multline*}
	\left(\int_I\left[\int_{r=1-|I|}\left(1-r\right)^{-1-sq} \|f_r\left(\zeta_x\right)\|_\XC^q\, dr\right]^{p/q} dx\right)^{\delta/p}
	\\
	\lesssim {}
	\left(\int_I\left[\sum_{l=\rk\left(I\right)+2} 2^{slq} A_l^{q/\delta}\right]^{p/q} dx\right)^{\delta/p}
	\\
	+
	\sum_{|n|>L}\frac{1}{\left(|n|-L\right)^2}\left(\int_I\left[\sum_{l=\rk\left(I\right)+2}2^{slq} B_{l,n}^{q/\delta}\right]^{p/q} dx\right)^{\delta/p},
	\end{multline*}
	where
	\begin{equation*}
	A_l:=M\left(\1_{\IC_L} \|f_{r_{l+2}}\|_\XC^\delta\right)\left(\zeta_x\right),
	\quad\textnormal{and}\quad 
	B_{l,n}:=\frac{1}{|I|}\int_{I_n} \|f_{r_{l+2}}\left(\zeta_y\right)\|_\XC^\delta\, dy.
	\end{equation*}
	Once again, we proceed as from \eqref{Eq:KeyEst1}.
\end{proof}

\begin{proof}[$ \|f\|_3\lesssim  \|f\|_1$:]
	We work with the sequence given by $r_l=1-2^{-1-l}$. Given $x\in\IC\in\DC\left(\T\right)$, for each $l\ge\rk\left(I\right)$ there exists a ball $B_l=B\left(r_l\zeta_x,d_l\right)\subset\Gamma_I\left(x\right)$. Moreover, these balls may be chosen so that they are disjoint and $d_l\gtrsim 2^{-l}$. By subharmonicity,
	\begin{multline*}
	\sum_{l=\rk \left(I\right)}^\infty 2^{slq} \|f_l\left(\zeta_x\right)\|_\XC^q 
	\lesssim
	\sum_{l=\rk \left(I\right)}^\infty 2^{\left(2+sq\right)l}\int_{B_l} \|f\left(w\right)\|_\XC^q\, dA\left(w\right)
	\\
	\lesssim \int_{\Gamma_I\left(\zeta_x\right)}\left(1-|w|\right)^{-2-sq} \|f\left(w\right)\|_\XC^q\, dA\left(w\right).
	\end{multline*}
	The statement follows.
\end{proof}

\subsubsection{The second stability property}\label{Sssec:Stability2}
\begin{lemma}\label{lemma:Stability2}
	If $x\in \T$ and $\left(r_l\right)_{l\ge 0},\left(r_l'\right)_{l\ge 0}\subset\left[0,1\right)$ are sequences such that \eqref{Eq:Asymptotic} holds, then
	\begin{equation*}
	f^*_{l,a}\left(\zeta_x\right)\lesssim f^*_{l',a}\left(\zeta_x\right).
	\end{equation*}
\end{lemma}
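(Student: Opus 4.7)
The plan is to bound $\|f_{r_l}(\zeta_y)\|_\XC$ for every $\zeta_y\in\T$ by a constant multiple of $f^*_{l',a}(\zeta_x)\bigl(1+|\zeta_x-\zeta_y|/(1-r_l)\bigr)^a$; taking the supremum over $\zeta_y$ then yields the claim. Under the implicit standing convention that both $(r_l)$ and $(r_l')$ satisfy $2^l(1-r_l)\approx 1$, the hypothesis \eqref{Eq:Asymptotic} yields $1-r_l\approx 1-r_l' \approx 1-r_l/r_l' \approx 2^{-l}$ and $r_l<r_l'$. The key mechanism will be subharmonicity of $\|f\|_\XC^\delta$ (valid for every $\delta>0$ per the preliminaries, thanks to the analyticity of $f$), with $\delta$ chosen small.

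Concretely, I would fix $\delta\in(0,1/a)$ and start from the subharmonic Poisson majorization
\begin{equation*}
\|f_{r_l}(\zeta_y)\|_\XC^\delta \le \int_\T \|f_{r_l'}(\zeta_z)\|_\XC^\delta P_{r_l/r_l'}(\zeta_{y-z})\, dz,
\end{equation*}
obtained by applying the standard domination of a subharmonic function by the Poisson integral of its boundary values to $w\mapsto \|f(r_l'w)\|_\XC^\delta$ at $w=(r_l/r_l')\zeta_y$. Next I would insert the $\delta$th power of the Peetre bound $\|f_{r_l'}(\zeta_z)\|_\XC\le f^*_{l',a}(\zeta_x)(1+|\zeta_x-\zeta_z|/(1-r_l'))^a$, and use the elementary inequality $1+A+B\le (1+A)(1+B)$ together with the triangle inequality $|\zeta_x-\zeta_z|\le |\zeta_x-\zeta_y|+|\zeta_y-\zeta_z|$ and $1-r_l'\approx 1-r_l$, to pull the factor $\bigl(1+|\zeta_x-\zeta_y|/(1-r_l)\bigr)^{a\delta}$ out of the integral. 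What remains,
\begin{equation*}
\int_\T \left(1+\tfrac{|\zeta_y-\zeta_z|}{1-r_l}\right)^{a\delta} P_{r_l/r_l'}(\zeta_{y-z})\, dz,
\end{equation*}
can be handled via \eqref{Eq:PoissonDecay} and the change of variable $t=|\zeta_y-\zeta_z|/(1-r_l)$, reducing the question to the convergence of $\int_0^\infty (1+t)^{a\delta}/(1+t^2)\, dt$, which is clear since $a\delta<1$. Raising to the $1/\delta$th power and taking the supremum over $\zeta_y\in\T$ finishes the proof.

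The main obstacle is the tension between a potentially large parameter $a$ and the modest $|\cdot|^{-2}$ decay of the Poisson kernel: carrying the computation out with $\delta=1$ only succeeds when $a<1$. The rescue is that $\|f\|_\XC^\delta$ remains subharmonic for every $\delta>0$ --- a feature of analyticity that persists in the Banach-valued setting --- so shrinking $\delta$ restores integrability of the crucial polynomial-times-Poisson integral, and the loss of exponent is recovered at the end by extracting $\delta$th roots.
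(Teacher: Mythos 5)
Your argument is correct, but it takes a different route from the paper. The paper's proof of this lemma does \emph{not} use subharmonicity at all: it exploits analyticity by replacing the Poisson kernel with the periodized Schwartz kernel $\Phi_{r_l/r_l'}$ (built from $\varphi$ with $\hat\varphi(\xi)=e^{-2\pi\xi}$ for $\xi\ge -1$), which reproduces analytic functions, $f_{r_l}=\Phi_{r_l/r_l'}\ast f_{r_l'}$, and — unlike the Poisson kernel — enjoys the decay \eqref{Eq:Decay} for \emph{every} $N$. One then writes the convolution at first powers, inserts the Peetre bound and \eqref{Eq:ElemIneq}, and the remaining integral $\int_\T|\Phi_{r_l/r_l'}(\zeta_{y-z})|\left(1+2^l|\zeta_y-\zeta_z|\right)^a dz$ converges simply by choosing $N>a+1$. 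You instead keep the genuine Poisson kernel, accept its merely quadratic decay, and compensate with subharmonicity of $\|f\|_\XC^\delta$ for $\delta\in(0,1/a)$, pulling out the factor $\left(1+\tfrac{|\zeta_x-\zeta_y|}{1-r_l}\right)^{a\delta}$ and taking $\delta$-th roots at the end; this is exactly the mechanism of the paper's first stability lemma (Lemma \ref{lemma:Stability1}), so your proof in effect unifies the two stability properties with one elementary device, and it is complete as written (the integrability condition $a\delta<1$ is precisely what your choice of $\delta$ guarantees, and your explicit invocation of the standing convention $2^l(1-r_l')\approx 1$ is needed by the paper's argument as well). What the paper's kernel-replacement buys in exchange is that it works at first powers for arbitrary $a$ and, more importantly, it is the device that powers the later Calder\'on-type estimates (Lemmas \ref{lemma:CalderonDecay} and \ref{Maximalcontrol}), where arbitrarily fast decay across dyadic scales is genuinely required and the small-$\delta$ trick alone would not suffice.
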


\begin{proof}	
	The proof relies on an idea that will be useful several times in this section. Chose $\varphi\in\SC$ such that $\hat \varphi\left(\xi\right)=e^{-2\pi \xi}$, $\xi\ge -1$. For $t>0$, let $\varphi_t\left(x\right)=\frac{1}{t}\varphi\left(\frac{x}{t}\right)$, and $\Phi_r\left(\zeta_x\right)=\sum_{k\in\Z}\varphi_t\left(x-k\right)$, where $r=e^{-2\pi t}$. For $f\in\AC\left(\XC\right)$ it holds that  $f_r=P_r\ast f=\Phi_r\ast f$. Note that by \eqref{Eq:PerDecay}, for any integer $N\ge 2$,
	\begin{equation}\label{Eq:Decay}
	|\Phi_r\left(\zeta_x\right)|\lesssim \frac{1}{1-r}\frac{1}{\left(1+\frac{|\zeta_x-1|}{1-r}\right)^N},\quad r\in\left[0,1\right),\, \zeta_x\in \T.
	\end{equation}
	If we replaced $\Phi_r$ with the ordinary Poisson kernel, then the above inequality would hold only for $N=2$.
	
	Now note that
	\begin{equation*}
	f_{l}\left(\zeta_y\right)=\int_{\zeta_z\in\T}\Phi_\frac{r_l}{r_l'}\left(\zeta_{y-z}\right)f_{l'}\left(\zeta_z\right)\, dz.
	\end{equation*}
	By the triangle inequality, together with \eqref{Eq:Peetre} and the elementary inequality
	\begin{equation}\label{Eq:ElemIneq}
	1+b|\zeta_x-\zeta_y|\le \left(1+b|\zeta_x-\zeta_z|\right)\left(1+b|\zeta_z-\zeta_y|\right),\quad x,y,z\in\R,b>0,
	\end{equation}
	we obtain
	\begin{align*}
	\frac{ \|f_{l}\left(\zeta_y\right)\|_\XC}{\left(1+\frac{|\zeta_x-\zeta_y|}{1-r_l}\right)^a}
	&\lesssim	\int_{\zeta_z\in\T}\frac{|\Phi_\frac{r_l}{r_l'}\left(y-z\right)| \|f_{l'}\left(z\right)\|_\XC}{\left(1+2^l|\zeta_x-\zeta_y|\right)^a}\, dz
	\\
	&\lesssim
	f^*_{l',a}\left(\zeta_x\right)	\int_{\zeta_z\in\T}|\Phi_\frac{r_l}{r_l'}\left(\zeta_{y-z}\right)|\frac{\left(1+2^l|\zeta_x-\zeta_z|\right)^a}{\left(1+2^l|\zeta_x-\zeta_y|\right)^a}\, dz
	\\
	&\le
	f^*_{l',a}\left(\zeta_x\right)
	\int_{\zeta_z\in\T}|\Phi_\frac{r_l}{r_l'}\left(\zeta_{y-z}\right)|\left(1+2^l|\zeta_y-\zeta_z|\right)^a\, dz.
	\end{align*}
	By \eqref{Eq:Asymptotic} and \eqref{Eq:Decay}, we see that if $N>a+1$, then
	\begin{align*}
	\int_{\zeta_z\in\T}|\Phi_\frac{r_l}{r_l'}\left(\zeta_{y-z}\right)|\left(1+2^l|\zeta_y-\zeta_z|\right)^a\, dz\lesssim 1.
	\end{align*}
	The statement follows.
\end{proof}

\begin{proof}[$ \|f\|_4$ is independent of $\left(r_l\right)_{l\ge 0}$:]
	Once again, we may assume that \eqref{Eq:Asymptotic} holds. The statement follows immediately from Lemma \ref{lemma:Stability2}.
\end{proof}

\begin{proof}[$ \|f\|_4\approx  \|f\|_5$:]
		We may chose $\left(r_l\right)_{l\ge 0}$ so that $r_l=1-2^{-l}$. Note that
	\begin{equation*}
	\int_{r=1-|I|}^1\left(1-r\right)^{-1-sq}f^*_{r,a}\left(\zeta_x\right)^q\, dr=\sum_{l=\rk\left(I\right)}^\infty \int_{r=r_l}^{r_{l+1}}\left(1-r\right)^{-1-sq}f^*_{r,a}\left(\zeta_x\right)^q\, dr.
	\end{equation*}
	For an arbitrary sequence $\left(r_l'\right)_{l\ge 0}$ such that $r_{l+2}'\in\left[r_{l},r_{l+1}\right]$, \eqref{Eq:Asymptotic} holds. By an application of Lemma \ref{lemma:Stability2}, we obtain
	\begin{align*}
	\int_{r=r_l}^{r_{l+1}}\left(1-r\right)^{-1-sq}f^*_{r,a}\left(\zeta_x\right)^q\, dr &\lesssim \int_{r=r_l}^{r_{l+1}}\left(1-r\right)^{-1-sq}f^*_{l+2,a}\left(\zeta_x\right)^q\, dr 
	\\
	&\lesssim 
	2^{slq}f^*_{l+2,a}\left(\zeta_x\right).
	\end{align*}
	It follows that $ \|f\|_5\lesssim  \|f\|_4$. The reverse estimate is similar.
\end{proof}

\subsubsection{Reverse maximal control}\label{Sssec:ReverseMaximal}

It will be convenient to work with the sequence given by $r_l=e^{-2\pi 2^{-l}}$. Let $\varphi_t$ and $\Phi_r$ be as in the proof of Lemma \ref{lemma:Stability2}, and set $\varphi_m=\varphi_{2^{-m}}$ and $\Phi_m=\Phi_{r_m}$. Then $\Phi_m$ is the $1$-periodization of $\varphi_m$. Now choose $\left\{W_m\right\}_{m=1}^\infty\subset \SC$ such that $\textnormal{supp }\hat W_1\subset\left[-\frac{1}{2},2\right]$, $\textnormal{supp }\hat W_2\subset\left[1,4\right]$, $\hat W_m\left(\xi\right)=\hat W_{m-1}\left(\xi/2\right)$ for $m\ge 3$, and $\sum_{m=1}^\infty \hat W_m\left(\xi\right)=1$ for $\xi\ge 0$. Define $\left\{\psi_m\right\}_{m=1}^\infty\subset \SC$ by
\begin{equation*}
\hat \psi_m\left(\xi\right)=\frac{\hat W_m\left(\xi\right)}{\hat\varphi_m\left(\xi\right)},\quad m\in\N,\xi\in\R.
\end{equation*}
Then
\begin{equation*}
\sum_{m=1}^\infty \hat\psi_m\left(\xi\right)\hat\varphi_m\left(\xi\right)=1\qquad \textnormal{for }\xi\ge 0.
\end{equation*}
Furthermore, for any $l\in\N_0$ we have that
\begin{equation*}
\sum_{m=1}^\infty \hat\psi_m\left(2^{-l}\xi\right)\hat\varphi_m\left(2^{-l}\xi\right)
=
\sum_{m=1}^\infty \hat\psi_m\left(2^{-l} \xi\right)\hat\varphi_{m+l}\left(\xi\right)
=1,\quad \xi\ge 0.
\end{equation*}
Define $\lambda_{m,l}\in\SC$ by $\hat \lambda_{m,l}\left(\xi\right)=\hat\psi_m\left(2^{-l}\xi\right)$, $\xi\in\R$, and let $\Lambda_{m,l}$ denote the corresponding $1$-periodization. We thus obtain
\begin{equation}\label{Eq:Calderon}
\sum_{m=1}^\infty \hat\Lambda_{m,l}\left(n\right) \hat\Phi_{m+l}\left(n\right)
=
1,\quad l,n\in\N_0.
\end{equation}
This is a so-called Calderon reproducing type formula for analytic functions.

We will need the following technical lemma:
\begin{lemma}\label{lemma:CalderonDecay}
	If $N\in\N$, then there exists a $K>0$ such that
	\begin{align*}
	\sup_{\zeta_y\in\T} \left(1+2^l|\zeta_y-1|\right)^N |\Phi_{l}\ast\Lambda_{m,l}\left(\zeta_y\right)|\le K 2^l2^{-mN}
	\end{align*}
	for all $m\in\N$ and $l\in\N_0$.
\end{lemma}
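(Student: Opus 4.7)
The plan is to pass through the Fourier side, prove a Schwartz-type decay on $\R$, and then periodize by hand. Since $\Phi_l\ast\Lambda_{m,l}$ is the $1$-periodization of $\varphi_l\ast\lambda_{m,l}$, and $\widehat{\varphi_l\ast\lambda_{m,l}}(\xi)=\hat\varphi(2^{-l}\xi)\hat\psi_m(2^{-l}\xi)$, I would set $g:=\varphi\ast\psi_m$ and observe that $\varphi_l\ast\lambda_{m,l}(x)=2^l g(2^l x)$. Using the definition of $\psi_m$ together with the fact that $\hat\varphi(\eta)=e^{-2\pi\eta}$ for $\eta\ge-1$, and the inclusions $\textnormal{supp}\,\hat W_1\subset[-1/2,2]$ and $\textnormal{supp}\,\hat W_m\subset[2^{m-2},2^m]$ for $m\ge 2$, one checks that
\begin{equation*}
\hat g(\xi)=\hat W_m(\xi)\,e^{-2\pi\xi(1-2^{-m})},
\end{equation*}
a smooth function of compact support, independent of $l$.

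The heart of the proof is then the decay estimate
\begin{equation*}
\sup_{y\in\R}(1+|y|)^N|g(y)|\lesssim_N 2^{-mN}.
\end{equation*}
Standard Fourier inversion bounds give $|g(y)|\le\|\hat g\|_{L^1}$ and $|2\pi y|^N|g(y)|\le\|\hat g^{(N)}\|_{L^1}$, so it is enough to estimate $\|\hat g\|_{L^1}+\|\hat g^{(N)}\|_{L^1}$. For $m\ge 2$, $1-2^{-m}\ge 3/4$ and $\xi\ge 2^{m-2}$ on $\textnormal{supp}\,\hat W_m$, so the exponential factor alone is already $\lesssim e^{-c 2^m}$ there. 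Combining with the dilation relation $\hat W_m(\xi)=\hat W_2(2^{-(m-2)}\xi)$ and Leibniz's rule absorbs the polynomial growth in the derivatives of $\hat W_m$, yielding $\|\hat g\|_{L^1}+\|\hat g^{(N)}\|_{L^1}\lesssim_N e^{-c 2^m}\le C_N 2^{-mN}$. The case $m=1$ is immediate because $g$ is then a single fixed Schwartz function, whose norms are absorbed into the constant.

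To pass to the periodic setting, identify $\zeta_y\in\T$ with $y\in[-1/2,1/2]$, so that $|\zeta_y-1|\approx|y|$. Then
\begin{equation*}
|\Phi_l\ast\Lambda_{m,l}(\zeta_y)|\le\sum_{k\in\Z}2^l|g(2^l(y-k))|\lesssim_N 2^{-mN}\sum_{k\in\Z}\frac{2^l}{(1+2^l|y-k|)^N}.
\end{equation*}
The $k=0$ term multiplied by $(1+2^l|y|)^N$ gives exactly $\lesssim 2^l 2^{-mN}$. For $|k|\ge 1$ and $y\in[-1/2,1/2]$ one has $|y-k|\ge|k|/2$, hence $\sum_{|k|\ge 1}(1+2^l|y-k|)^{-N}\lesssim_N 2^{-lN}$; together with the bound $(1+2^l|y|)^N\le 2^{(l+1)N}$ valid on $[-1/2,1/2]$, these tails also contribute $\lesssim 2^l 2^{-mN}$.

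The main obstacle is extracting the sharp $m$-dependence of the decay of $g$. Once one recognises that $\hat g$ has compact support and that the exponential factor $e^{-2\pi\xi(1-2^{-m})}$ is super-polynomially small in $m$ on that support, the rest reduces to routine Leibniz-rule bookkeeping and summation of geometric tails.
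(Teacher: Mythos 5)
Your proof is correct and takes essentially the same route as the paper: both pass to the Fourier side, where $\hat\varphi_l\hat\lambda_{m,l}$ is a compactly supported (rescaled) bump multiplied by an exponential factor of size $e^{-c2^m}$ on its support, and extract the polynomial decay via $N$-fold differentiation and Leibniz's rule; your only deviations are the harmless normalization $\varphi_l\ast\lambda_{m,l}(x)=2^l g(2^l x)$ and carrying out the periodization by hand rather than citing \eqref{Eq:PerDecay}. (Note that your tail estimate $\sum_{|k|\ge 1}(1+2^l|y-k|)^{-N}\lesssim 2^{-lN}$ requires $N\ge 2$, but, exactly as with \eqref{Eq:PerDecay} in the paper, the case $N=1$ follows trivially from the case $N=2$ since the claimed bound is monotone in $N$.)
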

\begin{proof}
	By \eqref{Eq:PerDecay}, it suffices to show that
	\begin{align*}
	\sup_{y\in\R} |y^N\left(\varphi_{l}\ast\lambda_{m,l}\right)\left(y\right)|\le K 2^l2^{-\left(l+m\right)N}
	\end{align*}
	For notational simplicity, we assume that $m\ge 2$. By elementary properties of the Fourier transform,
	\begin{align*}
	y^N\left(\varphi_{l}\ast\lambda_{m,l}\right)\left(y\right)
	&=
	\frac{1}{\left(2\pi i\right)^N}\int \left[\left(\frac{d}{d\xi}\right)^N\left(\hat\varphi_{l}\left(\xi\right)\hat\lambda_{m,l}\left(\xi\right)\right)\right]e^{2\pi i\xi y}\, d\xi
	\end{align*}
	Since $\hat\varphi_{l}\left(\xi\right)\hat\lambda_{m,l}\left(\xi\right)=e^{-2\pi \left(2^{-l}-2^{-l-m}\right)\xi} \hat W_2\left(2^{2-l-m}\xi\right)$, we may use Leibniz's rule together with the support of $\hat W_2$ to obtain
	\begin{align*}
	|y^N\left(\varphi_{l}\ast\lambda_{m,l}\right)\left(y\right)|
	&\lesssim
	\sum_{k=0}^N2^{-l\left(N-1\right)}2^{-m\left(N-k-1\right)}e^{-2\pi \left(2^m-1\right)}.
	\end{align*}
	This proves the statement for $m\ge 2$, since for any $k\in\N_0$,
	\begin{equation*}
	2^{-m\left(N-k-1\right)}e^{-2\pi \left(2^m-1\right)}\lesssim 2^{-mN}. 
	\end{equation*}
	The case $m=1$ is similar.
\end{proof}

The proof that $ \|\cdot\|_2\lesssim  \|\cdot\|_1$ is based on the following lemma:
\begin{lemma}\label{Maximalcontrol}
	Let $N\in\N$, $0<a\le N$, $\delta>0$ and $r_l=e^{-2\pi 2^l}$. Then there exists $K=K\left(a,N,\delta\right)>0$ such that
	\begin{equation*}
	f_{l,a}^*\left(\zeta_x\right)^\delta
	\le K
	\sum_{m=1}^\infty 2^{-mN\delta} 2^{m+l}\int_{y\in\T}\frac{ \|f_{l+m}\left(\zeta_y\right)\|_\XC^\delta}{\left(1+2^l|\zeta_x-\zeta_y|\right)^{a\delta}}\, dy
	\end{equation*}
	for $f\in\AC\left(\XC\right)$, $l\in\N_0$ and $x\in\T$.
\end{lemma}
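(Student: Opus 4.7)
The plan is to combine the Calder\'on reproducing formula \eqref{Eq:Calderon} with the kernel decay of Lemma \ref{lemma:CalderonDecay}. Since any $f\in\AC\left(\XC\right)$ has Fourier support in $\N$, the formula yields
\begin{equation*}
f_l=\sum_{m=1}^\infty K_{m,l}\ast f_{l+m},\qquad K_{m,l}:=\Phi_l\ast\Lambda_{m,l},
\end{equation*}
and by Lemma \ref{lemma:CalderonDecay} (with a parameter $\tilde N\in\N$ at our disposal),
\begin{equation*}
|K_{m,l}\left(\zeta_y\right)|\lesssim \frac{2^l\cdot 2^{-m\tilde N}}{\left(1+2^l|\zeta_y-1|\right)^{\tilde N}},\qquad\text{so in particular}\qquad \|K_{m,l}\|_{L^1\left(\T\right)}\lesssim 2^{-m\tilde N}.
\end{equation*}

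The crux is to convert this $L^1$-convolution bound into a pointwise $\delta$-power estimate. For $\delta\ge 1$, Jensen's inequality gives directly
\begin{equation*}
\|K_{m,l}\ast f_{l+m}\left(\zeta_y\right)\|_\XC^\delta\lesssim 2^{-m\tilde N\delta}\cdot 2^l\int_{\T}\frac{\|f_{l+m}\left(\zeta_z\right)\|_\XC^\delta}{\left(1+2^l|\zeta_y-\zeta_z|\right)^{\tilde N}}\, dz.
\end{equation*}
For $0<\delta<1$, Jensen is unavailable, and one must exploit that $F_{m,l}:=K_{m,l}\ast f_{l+m}$ has Fourier spectrum in $[2^{l+m-2},2^{l+m}]$ (for $m\ge 2$, by the support of $\hat W_m$). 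Thus $F_{m,l}$ extends to an analytic polynomial of degree $\lesssim 2^{l+m}$ on $\overline\D$, and subharmonicity of $w\mapsto\|F_{m,l}\left(w\right)\|_\XC^\delta$ on discs of radius $\approx 2^{-\left(l+m\right)}$ produces a Plancherel--Polya-type estimate of the same form, with the kernel decay reinserted by sub-additivity of $x\mapsto x^\delta$.

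In either case, I would then insert the Peetre weight $\left(1+2^l|\zeta_x-\zeta_y|\right)^{-a\delta}$ on both sides and transfer it from $\zeta_y$-centered to $\zeta_z$-centered via the elementary inequality \eqref{Eq:ElemIneq}; the excess factor $\left(1+2^l|\zeta_y-\zeta_z|\right)^{a\delta}$ is absorbed by the kernel decay provided $\tilde N\ge a\delta+2$, which is exactly where the hypothesis $a\le N$ enters. Summing in $m$---by H\"older with a $2^{-m\epsilon}$ geometric loss for $\delta\ge 1$, or by sub-additivity $\left(\sum a_m\right)^\delta\le\sum a_m^\delta$ for $\delta<1$---and finally taking the supremum over $\zeta_y$ (on which the right-hand side no longer depends) delivers a prefactor of the form $2^{-m\left(\tilde N\delta-\epsilon\right)}\cdot 2^l$. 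Choosing $\tilde N\ge N+1/\delta+\epsilon/\delta$ converts this into $2^{-mN\delta}\cdot 2^{l+m}$ times an additional harmless geometric factor that is absorbed into $K$.

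The main technical obstacle is the regime $0<\delta<1$: Jensen fails, and one must genuinely invoke the band-limited structure of $F_{m,l}$ via subharmonicity/Plancherel--Polya to bring the exponent $\delta$ inside the convolution. Reconciling the two natural scales---$2^{-l}$ from the Peetre weight versus $2^{-\left(l+m\right)}$ from the spectrum of $F_{m,l}$---is where the bookkeeping is most delicate; for $\delta\ge 1$, by contrast, the proof reduces to routine convolution estimates.
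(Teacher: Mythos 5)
Your reduction to the estimate $\|f_l(\zeta_x)\|_\XC \lesssim \sum_m 2^{-m\tilde N}2^l\int_\T \|f_{l+m}(\zeta_y)\|_\XC\,(1+2^l|\zeta_x-\zeta_y|)^{-\tilde N}dy$ via the Calder\'on formula \eqref{Eq:Calderon} and Lemma \ref{lemma:CalderonDecay}, and your treatment of the case $\delta\ge 1$ (Jensen/H\"older with a geometric loss in $m$, then the weight transfer through \eqref{Eq:ElemIneq} using $a\le N$), coincide with the paper's argument. The problem is the case $0<\delta<1$, which is also the case that matters in the application (the paper later fixes $\delta\in(1/a,\min\{p,q\})$, typically $<1$). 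Your plan is to use that $F_{m,l}=K_{m,l}\ast f_{l+m}$ is band-limited and to bring the power $\delta$ inside by subharmonicity, ``with the kernel decay reinserted by sub-additivity of $x\mapsto x^\delta$''. A Plancherel--Polya/subharmonicity argument does give $\|F_{m,l}(\zeta_y)\|_\XC^\delta\lesssim 2^{l+m}\int_\T\|F_{m,l}(\zeta_u)\|_\XC^\delta(1+2^{l+m}|\zeta_y-\zeta_u|)^{-b}du$, i.e.\ it brings $\delta$ inside \emph{relative to the values of $F_{m,l}$ itself}. But the lemma requires $\|f_{l+m}\|_\XC^\delta$ on the right-hand side, with the gain $2^{-mN\delta}$, and passing from $\|F_{m,l}\|_\XC^\delta$ (or from the convolution representation) to an integral of $\|f_{l+m}\|_\XC^\delta$ is exactly the original $\delta<1$ difficulty, merely relocated. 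Sub-additivity cannot do this: $(\int g)^\delta\le\int g^\delta$ is false, and if you dyadically decompose the convolution and use $(\sum_i a_i)^\delta\le\sum_i a_i^\delta$ you are left with $\delta$-powers of local averages of $\|f_{l+m}\|_\XC$, which for $\delta<1$ dominate, rather than are dominated by, local averages of $\|f_{l+m}\|_\XC^\delta$ (Jensen goes the wrong way). Indeed, the single-term inequality $\|K_{m,l}\ast h(\zeta_y)\|_\XC^\delta\lesssim 2^{-m\tilde N\delta}2^l\int\|h\|_\XC^\delta(1+2^l|\zeta_y-\zeta_z|)^{-\tilde N\delta}dz$ is false for general integrable $h$ (concentrate $h$ on a set of measure $\epsilon\to0$), so any proof must exploit that $f_{l+m}$ is a Poisson mean, smooth at scale $2^{-(l+m)}$; your sketch uses the band-limitedness of $F_{m,l}$ but never uses this property of $f_{l+m}$ at the step where it is needed.

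The paper closes this gap with a Rychkov--Ullrich type bootstrap rather than Plancherel--Polya: it introduces the auxiliary maximal function $M_lf(\zeta_x)=\sup_k\sup_{\zeta_y}2^{-kN}\|f_{l+k}(\zeta_y)\|_\XC(1+2^l|\zeta_x-\zeta_y|)^{-a}$, writes $\|f_{l+m}\|_\XC=\|f_{l+m}\|_\XC^{1-\delta}\|f_{l+m}\|_\XC^{\delta}$ inside the level-one estimate to obtain $M_lf\lesssim\sum_m 2^{-mN\delta}2^{m+l}\,(M_lf)^{1-\delta}\int\|f_{l+m}\|_\XC^\delta(1+2^l|\cdot|)^{-a\delta}dy$, and then divides by $(M_lf)^{1-\delta}$; the division is legitimized by a separate finiteness statement (Lemma \ref{lemma2}), proved by subharmonicity. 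Your proposal contains no substitute for this mechanism (nor any finiteness consideration), so as written it does not establish the lemma for $0<\delta<1$. If you want to pursue your route, you would need an additional reverse-Jensen ingredient for Poisson means in the spirit of Lemma \ref{lemma:Stability1} to convert averages of $\|f_{l+m}\|_\XC$ into averages of $\|f_{l+m}\|_\XC^\delta$, at which point the argument is no longer simpler than the paper's.
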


\begin{proof}
	By \eqref{Eq:Calderon},
	\begin{align*}
	f_l\left(\zeta_x\right)&=\Phi_l\ast f\left(\zeta_x\right)
	\\
	&=
	\sum_{m=1}^\infty \Phi_l\ast \Lambda_{m,l}\ast\Phi_{l+m}\ast f\left(\zeta_x\right)
	\\
	&=
	\int_{\T} \left(\Phi_l\ast \Lambda_{m,l}\right)\left(\zeta_{x-y}\right)\left(\Phi_{l+m}\ast f\right)\left(\zeta_y\right)\, dy\\
	&=
	\int_{\T} \left(\Phi_l\ast \Lambda_{m,l}\right)\left(\zeta_{x-y}\right)\left(1+2^l|\zeta_x-\zeta_y|\right)^N\frac{\left(\Phi_{l+m}\ast f\right)\left(\zeta_y\right)}{\left(1+2^l|\zeta_x-\zeta_y|\right)^N}\, dy.
	\end{align*}
	By the triangle inequality and Lemma \ref{lemma:CalderonDecay}, we have that
	\begin{equation}\label{Eq:Prelim}
	\begin{split}
	 \|f_l\left(\zeta_x\right)\|_\XC 
	&\le
	\sum_{m=1}^\infty \sup_{\zeta_y\in\T} \left|\Phi_{r_l^\rho}\ast\Lambda_{m,l,\rho}\left(\zeta_y\right)\left(1+2^l|\zeta_y-1|\right)^N\right|
	\int_{\zeta_y\in\T}\frac{ \|f_{l+m}\left(\zeta_y\right)\|_\XC}{\left(1+2^l|\zeta_x-\zeta_y|\right)^N}\, dy
	\\
	&\lesssim
	\sum_{m=1}^\infty 2^{-mN}2^l
	\int_{\zeta_y\in\T}\frac{ \|f_{l+m}\left(\zeta_y\right)\|_\XC}{\left(1+2^l|\zeta_x-\zeta_y|\right)^N}\, dy.
	\end{split}
	\end{equation}
	If $\delta>1$, then we proceed as follows: Since $N$ is arbitrary, clearly
	\begin{align*}
	 \|f_l\left(\zeta_x\right)\|_\XC \lesssim
	\sum_{m=1}^\infty 2^{-m\left(N+1\right)}2^l
	\int_{\zeta_y\in\T}\frac{ \|f_{l+m}\left(\zeta_y\right)\|_\XC}{\left(1+2^l|\zeta_x-\zeta_y|\right)^{N+1}}\, dy.
	\end{align*}
	Applying Hölder's inequality twice we obtain
	\begin{align*}
	 \|f_l\left(\zeta_x\right)\|_\XC^\delta 
	&\lesssim
	\sum_{m=1}^\infty 2^{-mN\delta}2^{l\delta}
	\left(\int_{\zeta_y\in\T}\frac{ \|f_{l+m}\left(\zeta_y\right)\|_\XC}{\left(1+2^l|\zeta_x-\zeta_y|\right)^{N+1}}\, dy\right)^\delta
	\\
	&\lesssim
	\sum_{m=1}^\infty 2^{-mN\delta}2^{l}
	\int_{\zeta_y\in\T}\frac{ \|f_{l+m}\left(\zeta_y\right)\|_\XC^\delta}{\left(1+2^l|\zeta_x-\zeta_y|\right)^{N\delta}}\, dy.
	\end{align*}
	Now use that $a\le N$, divide by $\left(1+2^l|\zeta_z-\zeta_x|\right)^{a\delta}$ and use \eqref{Eq:ElemIneq} with $b=2^l$ get that
	\begin{align*}
	\frac{ \|f_l\left(\zeta_x\right)\|_\XC^\delta}{\left(1+2^l|\zeta_z-\zeta_x|\right)^{a\delta}} 
	\lesssim
	\sum_{m=1}^\infty 2^{-mN\delta}2^{l}
	\int_{\zeta_y\in\T}\frac{ \|f_{l+m}\left(\zeta_y\right)\|_\XC^\delta}{\left(1+2^l|\zeta_z-\zeta_y|\right)^{a\delta}}\, dy.
	\end{align*}
	This completes the proof for $\delta >1$.
	
	If $\delta\le 1$, then we instead do the following: By a shift of the index $l$ in \eqref{Eq:Prelim} we see that for each $k\in\N_0$,
	\begin{align*}
	2^{-kN} \|f_{k+l}\left(\zeta_x\right)\|_\XC 
	&\lesssim 
	\sum_{m=1}^\infty 2^{-\left(m+k\right)N}2^{l+k}
	\int_{\zeta_y\in\T}\frac{ \|f_{k+l+m}\left(\zeta_y\right)\|_\XC}{\left(1+2^{l+k}|\zeta_x-\zeta_y|\right)^N}\, dy
	\\
	&\le
	\sum_{m=1}^\infty 2^{-\left(m+k\right)N}2^{m+l+k}
	\int_{\zeta_y\in\T}\frac{ \|f_{k+l+m}\left(\zeta_y\right)\|_\XC}{\left(1+2^{l}|\zeta_x-\zeta_y|\right)^N}\, dy
	\\
	&=
	\sum_{m=1+k}^\infty 2^{-mN}2^{m+l}
	\int_{\zeta_y\in\T}\frac{ \|f_{l+m}\left(\zeta_y\right)\|_\XC}{\left(1+2^{l}|\zeta_x-\zeta_y|\right)^N}\, dy
	\\
	&\le
	\sum_{m=1}^\infty 2^{-mN}2^{m+l}
	\int_{\zeta_y\in\T}\frac{ \|f_{l+m}\left(\zeta_y\right)\|_\XC}{\left(1+2^{l}|\zeta_x-\zeta_y|\right)^N}\, dy.
	\end{align*}
	Using \eqref{Eq:ElemIneq} again, we have that
	\begin{align}\label{Eq:Prelim2}
	\frac{2^{-kN} \|f_{l+k}\left(\zeta_x\right)\|_\XC}{\left(1+2^l|\zeta_z-\zeta_x|\right)^a} 
	\lesssim 
	\sum_{m=1}^\infty 2^{-mN}2^{m+l}
	\int_{\zeta_y\in\T}\frac{ \|f_{m+l}\left(\zeta_y\right)\|_\XC}{\left(1+2^{l}|\zeta_z-\zeta_y|\right)^a}\, dy.
	\end{align}
	Introduce the maximal function
	\begin{align*}
	M_lf\left(\zeta_x\right)=\sup_{k\in\N_0}\sup_{\zeta_y\in\T}\frac{2^{-kN} \|f_{l+k}\left(\zeta_y\right)\|_\XC}{\left(1+2^l|\zeta_x-\zeta_y|\right)^a},\quad \zeta_x\in\T.
	\end{align*}
	By \eqref{Eq:Prelim2},
	\begin{align*}
	M_lf\left(\zeta_x\right)
	\lesssim 
	\sum_{m=1}^\infty 2^{-mN\delta }2^{m+l}M_lf\left(\zeta_x\right)^{1-\delta}
	\int_{\zeta_y\in\T}\frac{ \|f_{m+l}\left(\zeta_y\right)\|_\XC^\delta}{\left(1+2^{l}|\zeta_z-\zeta_y|\right)^{a\delta}}\, dy.
	\end{align*}
	By Lemma \ref{lemma2} below, this implies that
	\begin{align*}
	M_{l,\rho}\left(\zeta_x\right)^\delta\lesssim 
	\sum_{m=1}^\infty 2^{-mN\delta}2^{m+l}
	\int_{\zeta_y\in\T}\frac{ \|f_{m+l}\left(\zeta_y\right)\|_\XC^\delta}{\left(1+2^{l}|\zeta_z-\zeta_y|\right)^{a\delta}}\, dy,
	\end{align*}
	whenever the right-hand side i finite. This completes the proof.
\end{proof}

\begin{lemma}\label{lemma2}
	If 
	\begin{align*}
	\sum_{m=1}^\infty 2^{-mN\delta} 2^{m+l}\int_{y\in\T}\frac{ \|f_{m+l}\left(\zeta_y\right)\|_\XC^\delta}{\left(1+2^l|\zeta_x-\zeta_y|\right)^{a\delta}}\, dy <\infty,
	\end{align*}
	then $M_lf\left(\zeta_x\right)<\infty$.
\end{lemma}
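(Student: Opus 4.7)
The plan is to exploit subharmonicity of $\|f\|_\XC^\delta$ on $\D$ to convert the hypothesized integral condition into a crude uniform-in-$k$ bound of the form $2^{-kN}\|f_{l+k}\|_{L^\infty(\T,\XC)} \le C < \infty$. Combined with the trivial estimate $(1+2^l|\zeta_x-\zeta_y|)^a \ge 1$, this immediately yields $M_lf(\zeta_x) < \infty$, which is exactly what the lemma asserts. Only qualitative finiteness is needed here: once $M_lf(\zeta_x)$ is known to be finite, the self-similar inequality derived during the proof of Lemma~\ref{Maximalcontrol} automatically upgrades it to the quantitative estimate $M_lf(\zeta_x)^\delta \lesssim S$, where $S$ denotes the hypothesized sum.

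First, since each summand of $S$ is bounded by $S$, and since $(1+2^l|\zeta_x-\zeta_y|)^{-a\delta}$ is bounded from below on $\T$ by a positive constant depending only on $l$, $a$, and $\delta$, one obtains for every $m \ge 1$
\begin{equation*}
\int_\T \|f_{l+m}(\zeta_y)\|_\XC^\delta\, dy \lesssim_{l,a,\delta} S \cdot 2^{mN\delta - m - l}.
\end{equation*}
Second, the function $w \mapsto \|f(w)\|_\XC^\delta$ is subharmonic on $\D$ and continuous on $\{|w| \le r_{l+k+1}\}$, so majorization by the Poisson integral gives
\begin{equation*}
\|f_{l+k}(\zeta_y)\|_\XC^\delta \le \int_\T P_{r_{l+k}/r_{l+k+1}}(\zeta_{y-z}) \|f_{l+k+1}(\zeta_z)\|_\XC^\delta\, dz.
\end{equation*}
With the choice $r_m = e^{-2\pi 2^{-m}}$ one has $1 - r_{l+k}/r_{l+k+1} \approx 2^{-(l+k)}$, so the crude bound $P_r \le 2/(1-r)$ gives $P_{r_{l+k}/r_{l+k+1}} \lesssim 2^{l+k}$ uniformly on $\T$. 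Inserting the $L^\delta$-estimate above at index $m = k+1$ yields
\begin{equation*}
\|f_{l+k}\|_{L^\infty(\T,\XC)}^\delta \lesssim_{l,a,\delta} 2^{l+k}\cdot S \cdot 2^{(k+1)N\delta - (k+1) - l} \lesssim_{l,a,\delta} S \cdot 2^{(k+1)N\delta},
\end{equation*}
and hence $2^{-kN}\|f_{l+k}\|_{L^\infty(\T,\XC)}$ is bounded uniformly in $k \in \N_0$, which completes the argument.

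The main obstacle is purely notational: one must verify that the $k$-dependent powers of $2$ coming from the Poisson-kernel bound and from the $L^\delta$-estimate combine to produce precisely $2^{(k+1)N\delta}$, so that the weight $2^{-kN\delta}$ absorbs the $k$-growth completely. This cancellation is not a coincidence; it works because the sequence $r_m = e^{-2\pi 2^{-m}}$ satisfies $1 - r_m \approx 2^{-m}$, which is exactly the dyadic scaling built into the Peetre maximal function $f^*_{l,a}$.
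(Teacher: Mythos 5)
Your argument is correct and follows essentially the same route as the paper: since $l$ is fixed and $\T$ is compact, the weight $(1+2^l|\zeta_x-\zeta_y|)^{a\delta}$ is comparable to a constant, so the hypothesis gives term-wise bounds $\int_\T\|f_{l+m}\|_\XC^\delta\,dy\lesssim_l S\,2^{m(N\delta-1)}$, and subharmonicity of $\|f\|_\XC^\delta$ together with the crude Poisson bound $P_r\lesssim(1-r)^{-1}$ at scale $1-r_{l+k}/r_{l+k+1}\approx 2^{-(l+k)}$ turns these into the uniform bound $\sup_k\sup_y 2^{-kN}\|f_{l+k}(\zeta_y)\|_\XC<\infty$, which dominates $M_lf(\zeta_x)$. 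Your bookkeeping of the powers of $2$ matches the paper's computation, so nothing further is needed.
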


\begin{proof}
	Since $l$ is a fixed number and $\T$ is compact we may equivalently show that
	if
	\begin{align}
	\sum_{m=1}^\infty 2^{m\left(1-N\delta\right)}\int_{\zeta_y\in\T}  \|f_{m+l}\left(\zeta_y\right)\|_\XC^\delta\, dy <\infty, \label{converges}
	\end{align}
	then
	\begin{align*}
	\sup_{m\in\N_0}\sup_{\zeta_y\in\T} 2^{-mN\delta} \|f_{m+l}\left(\zeta_y\right)\|_\XC^\delta<\infty.
	\end{align*}
	By subharmonicity
	\begin{align*}
	 \|f_{m+l}\left(\zeta_y\right)\|_\XC^\delta
	\le
	\int  \|f_{m+l+1}\|_\XC^\delta P_{\frac{r_{l+m}}{r_{l+m+1}}}\left(\zeta_y\right)\, dy
	\lesssim
	2^{m+l} \int  \|f_{m+l+1}\left(\zeta_y\right)\|_\XC^\delta\, dy
	\lesssim
	2^l2^{mN\delta},
	\end{align*}
	where the last inequality holds whenever \eqref{converges} converges.
\end{proof}

We now prove that $ \|\cdot\|_2\lesssim  \|\cdot\|_1$. Fix $\delta\in \left(\frac{1}{a},\min\left\{p,q\right\}\right)$, and chose $N\in\N$ such that $N>a$ and $1-N\delta -sq<0$. Given $I\in\DC\left(\T\right)$, define $I_n=I+n|I|$, where $1-\frac{1}{2|I|}\le n\le \frac{1}{2|I|}$, and $3I=\cup_{|n|\le 1}I_n$. By Lemma \ref{Maximalcontrol},
\begin{align*}
f_{l,a}^*\left(\zeta_x\right)^\delta
\lesssim {} &
\sum_{m=1}^\infty 2^{-mN\delta} 2^{m+l}\int_{\T}\frac{ \|f_{l+m}\left(\zeta_y\right)\|_\XC^\delta}{\left(1+2^l|\zeta_x-\zeta_y|\right)^{a\delta}}\, dy.
\end{align*}
We treat the right-hand side as in the proof of Lemma \ref{lemma:Stability1}.
\begin{align*}
f_{l,a}^*\left(\zeta_x\right)^\delta
\lesssim {} &
\sum_{m=1}^\infty 2^{-mN\delta} 2^{m+l}\int_{3I}\frac{ \|f_{l+m}\left(\zeta_y\right)\|_\XC^\delta}{\left(1+2^l|\zeta_x-\zeta_y|\right)^{a\delta}}\, dy
\\
&+
\sum_{|n|\ge 2}\sum_{m=1}^\infty 2^{-mN\delta} 2^{m+l}\int_{I_n}\frac{ \|f_{l+m}\left(\zeta_y\right)\|_\XC^\delta}{\left(1+2^l|\zeta_x-\zeta_y|\right)^{a\delta}}\, dy
\\
\lesssim {} &
\underbrace{\sum_{m=1}^\infty 2^{-mN\delta} 2^{m}M\left(\1_{3I} \|f_{m+l}\|_\XC^\delta\right)\left(x\right)}_{=:A_l}
\\
&+
\sum_{|n|\ge 2}\underbrace{\sum_{m=1}^\infty 2^{-mN\delta} 2^{m+l}\frac{1}{2^{la\delta}|n|^{a\delta}|I|^{a\delta}}\int_{I_n} \|f_{l+m}\left(\zeta_y\right)\|_\XC^\delta \, dy}_{=:B_{l,n}}.
\end{align*}
By successive applications of Minkowski's inequality,
\begin{align*}
\left(\int_{\zeta_x\in I}\left[\sum_{l=\rk \left(I\right)}^\infty 2^{slq}f_{l,a}^*\left(\zeta_x\right)^q\right]^{p/q}dx\right)^{\delta/p}
\lesssim {} &
\left(\int_{x\in I}\left[\sum_{l=\rk \left(I\right)}^\infty 2^{slq}A_l^{q/\delta}\right]^{p/q}dx\right)^{\delta/p}
\\
&+
\sum_{|n|\ge 2}\left(\int_{\zeta_x\in I}\left[\sum_{l=\rk \left(I\right)}^\infty 2^{slq}B_{l,n}^{q/\delta}\right]^{p/q}dx\right)^{\delta/p}.
\end{align*}
By Jensen's inequality and rearrangement of terms,
\begin{align*}
\sum_{l=\rk \left(I\right)}^\infty 2^{slq}A_l^{q/\delta}
&\lesssim
\sum_{l=\rk \left(I\right)}^\infty 2^{slq}\sum_{m=1}^\infty 2^{m\left(1-N\delta\right)} M\left(\1_{3I} \|f_{m+l}\|_\XC^\delta\right)\left(\zeta_x\right)^{q/\delta}
\\
&=
\sum_{l=\rk \left(I\right)+1}^\infty 2^{slq}\sum_{m=1}^{l-\rk\left(I\right)} 2^{m\left(1-N\delta-sq\right)} M\left(\1_{3I} \|f_{l}\|_\XC^\delta\right)\left(\zeta_x\right)^{q/\delta}
\\
&\lesssim
\sum_{l=\rk \left(I\right)+1}^\infty 2^{slq} M\left(\1_{3I} \|f_{l}\|_\XC^\delta\right)\left(\zeta_x\right)^{q/\delta}.
\end{align*}
It follows from Theorem \ref{thm:Fefferman-Stein} that
\begin{equation*}
\left(\int_{x\in I}\left[\sum_{l=\rk \left(I\right)}^\infty 2^{slq}A_l^{q/\delta}\right]^{p/q}dx\right)^{\delta/p}\lesssim  \|f\|_3|I|^{\tau\delta}.
\end{equation*}
The corresponding estimate for $B_{l,n}$,
\begin{equation*}
\sum_{|n|\ge 2}\left(\int_{\zeta_x\in I}\left[\sum_{l=\rk \left(I\right)}^\infty 2^{slq}B_{l,n}^{q/\delta}\right]^{p/q}dx\right)^{\delta/p}\lesssim \sum_{|n|\ge 2} \frac{ \|f\|_3 |I|^{\tau \delta}}{|n|^{a\delta}}\lesssim  \|f\|_3 |I|^{\tau \delta},
\end{equation*}
is similar.

\subsubsection{Proof of Lemma \ref{lemma:HalfLifting}}\label{Sssec:HalfLifting}

It suffices to consider the cases where $\alpha>0$ is large, and when $\alpha=-1$. The general case then follows from the diagram
\begin{equation*}
\begin{tikzcd}
F_{p,q}^{s,\tau}\left(\D,\XC\right) \arrow{rr}{D^\alpha} \arrow[swap]{dr}{D^{N+\alpha}} & & F_{p,q}^{s-\alpha,\tau}\left(\D,\XC\right) \\
& F_{p,q}^{s-N-\alpha,\tau}\left(\D,\XC\right) \arrow[swap]{ur}{D^{-N}} & 
\end{tikzcd}
\end{equation*}
provided that $N\in\N$ is sufficiently big.

\begin{proof}[The case $\alpha\ge a+2$:]
	We will prove that $ \|D^\alpha f|_{p,q}^{s-\alpha}\|_3\lesssim  \|f|_{p,q}^{s}\|_4$. Let $r_l=e^{-2^{-l}}$. Then $r_l=r_{l+1}^2$. Let $\varphi_{l}^{\left(\alpha\right)}$ be given by $\hat \varphi_{l}^{\left(\alpha\right)}\left(\xi\right)=\left(1+\xi\right)^\alpha e^{-2^{-l}\xi}$ for $\xi\ge -1$. Extend $\hat \varphi_{l}^{\left(\alpha\right)}$ by zero. If $\Phi_l^{\left(\alpha\right)}$ is the corresponding periodization, then $D^\alpha f\left(r_l\zeta_x\right)=\Phi_{l+1}^{\left(\alpha\right)}\ast \Phi_{l+1}\ast f\left(\zeta_x\right)$. This yields that 
	\begin{equation*}
	 \|D^\alpha f\left(r_l\zeta_x\right)\|_\XC\le f_{l+1,a}^*\left(\zeta_x\right)\int _\T |\Phi_{l+1}^{\left(\alpha\right)}\left(\zeta_y\right)|\left(1+2^{l+1}|\zeta_y-1|\right)^a\, dy.
	\end{equation*}
	Similar to Lemma \ref{lemma:CalderonDecay} we have that
	\begin{equation*}
	|\Phi_{l}^{\left(\alpha\right)}\left(\zeta_y\right)|\lesssim \frac{2^{l\left(1+\alpha\right)}}{\left(1+2^l|\zeta_y-1|\right)^N},
	\end{equation*}
	provided that $\alpha >N$. If $\alpha > a +2$, then we may choose $N$ such that $N>a+1$. It follows that
	\begin{equation*}
	 \|D^\alpha f\left(r_l\zeta_x\right)\|_\XC\lesssim 2^{l\alpha} f_{l+1,a}^*\left(\zeta_x\right).
	\end{equation*}
	The statement that $D^\alpha :F_{p,q}^{s,\tau}\left(\D,\XC\right) \to  F_{p,q}^{s-\alpha,\tau}\left(\D,\XC\right) $ now follows from Theorem \ref{thm:Norms}.
\end{proof}

The following estimate follows by the usual tricks, e.g. the proof of \cite{Garnett2007:BddAnalFcnsBook}*{Equation I.3.9}:
\begin{lemma}\label{lemma:RadialGrowth}
	Let $0<p<\infty$. There exists $K=K\left(p\right)>0$ such that,
	\begin{equation*}
	 \|f\left(w\right)\|_\XC\le K \left(1-|w|\right)^{s+\tau-\frac{1}{p}} \|f|_{p,q}^{s,\tau}\|_1
	\end{equation*}
	for $f\in\AC\left(\XC\right)$, $0<q<\infty$, $s,\tau\in\R$. In particular, if $s+\tau-\frac{1}{p}>0$ and $f\in\AC\left(\XC\right)$, then $ \|f|_{p,q}^{s,\tau}\|_1<\infty$ implies that $f\equiv 0$.
\end{lemma}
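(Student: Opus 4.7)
The strategy is to combine subharmonicity of $\|f\|_{\XC}^q$ with the geometry of Stolz angles to replace the pointwise value $\|f(w)\|_{\XC}$ with an integral of the type appearing inside $\|\cdot\|_1$, and then to extract the weight $(1-|w|)^{s+\tau-1/p}$ from the normalization constants.

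Fix $w\in\D$ (we may assume $|w|$ is close to $1$, since on compact subsets of $\D$ the analytic function $f$ is automatically bounded), put $\zeta_w=w/|w|$, and choose a dyadic arc $I\in\DC(\T)$ with $\zeta_w\in I$ and $|I|\approx 1-|w|$, the implicit constants small enough that for every $\zeta_x\in I$ the point $w$ lies comfortably in the interior of the Stolz angle $\Gamma(\zeta_x)$. Because $\|f\|_{\XC}^q$ is subharmonic, the sub-mean-value inequality on the ball $B\left(w,c(1-|w|)\right)\subset\D$ gives
\begin{equation*}
\|f(w)\|_{\XC}^q \lesssim \frac{1}{(1-|w|)^2}\int_{B(w,c(1-|w|))}\|f(z)\|_{\XC}^q\, dA(z).
\end{equation*}
On this ball $1-|z|\approx 1-|w|$, so we may freely insert the weight $(1-|z|)^{-2-sq}$ at the cost of a factor $(1-|w|)^{sq}$. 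For $c$ sufficiently small, the ball $B(w,c(1-|w|))$ is contained in $\Gamma_I(\zeta_x)$ for every $\zeta_x\in I$, and hence
\begin{equation*}
\|f(w)\|_{\XC}^q \lesssim (1-|w|)^{sq}\int_{\Gamma_I(\zeta_x)}(1-|z|)^{-2-sq}\|f(z)\|_{\XC}^q\, dA(z),\quad \zeta_x\in I.
\end{equation*}

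Raising to the power $p/q$, integrating $\zeta_x$ over $I$, and using that the left-hand side is independent of $\zeta_x$ produces a factor $|I|$ on the left, so that by the definition of $\|\cdot\|_1$,
\begin{equation*}
|I|\,\|f(w)\|_{\XC}^p \lesssim (1-|w|)^{sp}\,|I|^{\tau p}\,\|f|_{p,q}^{s,\tau}\|_1^p.
\end{equation*}
Since $|I|\approx 1-|w|$, dividing by $|I|$ and taking $p$-th roots yields $\|f(w)\|_{\XC}\le K(1-|w|)^{s+\tau-1/p}\|f|_{p,q}^{s,\tau}\|_1$. For the \textit{in particular} part, if $s+\tau-1/p>0$ and $\|f|_{p,q}^{s,\tau}\|_1<\infty$, then $\|f(w)\|_{\XC}\to 0$ as $|w|\to 1$, and the maximum principle applied to the subharmonic function $\|f\|_{\XC}$ forces $f\equiv 0$.

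The only real obstacle is the geometric containment $B(w,c(1-|w|))\subset \Gamma_I(\zeta_x)$ for all $\zeta_x\in I$, which forces $|I|/(1-|w|)$ and $c$ to be small enough relative to the aperture of $\Gamma$. This is a standard choice-of-constants issue, with no serious content beyond the observation that the Stolz angle $\Gamma(\zeta_x)$ has an open cone at $\zeta_x$, so $w$ and a small Euclidean neighbourhood of it both sit inside the cone whenever $|w-\zeta_x|$ is comparable to (but strictly less than a controllable multiple of) $1-|w|$.
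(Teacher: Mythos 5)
Your overall strategy --- the sub-mean-value inequality for the subharmonic function $\|f\|_\XC^q$ on a ball of radius $c(1-|w|)$, insertion of the weight $(1-|z|)^{-2-sq}$, and comparison with the inner integral of $\|\cdot\|_1$ over an arc of length $\approx 1-|w|$ --- is exactly the ``usual trick'' the paper has in mind (it gives no proof, only a pointer to Garnett, Eq.\ I.3.9). But the geometric step as you state it fails, because $\Gamma_I(\zeta_x)$ is the cone \emph{truncated at depth} $|I|$; this is how it is used throughout Section 4, e.g.\ in the proof that $\|\cdot\|_1\lesssim\|\cdot\|_2$, where the $r$-integration starts at $1-|I|$. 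For $B\left(w,c(1-|w|)\right)\subset\Gamma_I(\zeta_x)$ you therefore need two things at once: every $z$ in the ball must satisfy $1-|z|<|I|$, which forces $|I|>(1+c)(1-|w|)$; and the ball must lie in $\Gamma(\zeta_x)$, which --- since the Stolz angle of this paper is the convex hull of $\{\zeta_x\}\cup\frac{1}{2}\T$ and so has half-aperture $30^\circ$ at its vertex --- forces the arc-distance from $\zeta_x$ to $\zeta_w$ to be at most about $\tan(30^\circ)\,(1-|w|)<1-|w|<|I|$. Hence the containment cannot hold for \emph{every} $\zeta_x\in I$ once $\zeta_w\in I$ and $|I|\gtrsim 1-|w|$, and your closing suggestion to take $|I|/(1-|w|)$ small goes in the wrong direction: shrinking $|I|$ below $1-|w|$ empties the truncated cone of all points at depth $\approx 1-|w|$, so that not even $w$ itself lies in $\Gamma_I(\zeta_x)$.

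The repair is routine and leaves your computation untouched: take the dyadic arc $I\ni\zeta_w$ with $|I|\approx 4(1-|w|)$, and integrate only over the subset $J$ of those $\zeta_x\in I$ whose arc-distance to $\zeta_w$ is at most $\epsilon(1-|w|)$, with $\epsilon$ and $c$ small depending on the aperture; then $B\left(w,c(1-|w|)\right)\subset\Gamma_I(\zeta_x)$ for every $\zeta_x\in J$, and $|J|\gtrsim 1-|w|\approx|I|$, so integrating your pointwise bound over $J$ instead of all of $I$ gives the same factor $|I|$ up to a constant and hence the same conclusion. Two smaller points: the case of $w$ in a fixed compact subset of $\D$ is not ``automatic'' (boundedness on compacts is not a bound in terms of $\|f|_{p,q}^{s,\tau}\|_1$), but the same argument with $I=\T$ and a ball contained in $\frac{1}{2}\D$ handles it; and your constant inevitably depends on $s$ and $q$ as well as $p$ through the weight-insertion step, which is harmless for the way the lemma is used.
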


The following straightforward, yet clever, adaptation of Hardy's inequality is from \cite{Flett1972:DualIneqHardyLittlewood}*{p. 758}:
\begin{lemma}\label{lemma:HardyIneq}
	Let $q\ge 1$ and $\mu<0$. There exists $K=K\left(q,\mu\right)$ such that
	\begin{equation*}
	\int_0^1\left(1-r\right)^{-1-\mu q}\left\{\int_0^r h\left(\rho\right)\, d\rho\right\}^qdr\le K \int_0^1\left(1-r\right)^{-1-\left(\mu-1\right) q}h\left(\rho\right)^q\, dr,
	\end{equation*}
	whenever $h:\left[0,1\right)\to\left[0,\infty\right]$ is measurable.
\end{lemma}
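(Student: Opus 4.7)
The plan is to reduce to the two standard ingredients: the Fubini trick (for $q=1$) and a Hölder-with-weight trick (for $q>1$), each followed by evaluating elementary integrals of the form $\int (1-r)^\gamma\, dr$.

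For $q=1$ the proof is immediate: swapping the order of integration,
\begin{equation*}
\int_0^1 (1-r)^{-1-\mu}\int_0^r h(\rho)\, d\rho\, dr
=\int_0^1 h(\rho)\int_\rho^1 (1-r)^{-1-\mu}\, dr\, d\rho,
\end{equation*}
and since $\mu<0$ the inner integral equals $(1-\rho)^{-\mu}/(-\mu)$, which is exactly $(1-\rho)^{-1-(\mu-1)}/(-\mu)$. So $K=1/(-\mu)$ works.

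For $q>1$ the idea is to insert a weight before applying Hölder. Fix $\beta$ satisfying both $\beta q'>1$ and $\beta>1-\mu-1/q$ (any $\beta$ sufficiently large works, using $\mu<0$); $q'=q/(q-1)$ is the conjugate exponent. Write
\begin{equation*}
\int_0^r h(\rho)\, d\rho=\int_0^r h(\rho)(1-\rho)^{\beta}\cdot (1-\rho)^{-\beta}\, d\rho
\end{equation*}
and apply Hölder with exponents $q,q'$. The elementary bound $\int_0^r (1-\rho)^{-\beta q'}\, d\rho\lesssim (1-r)^{1-\beta q'}$ (valid because $\beta q'>1$) then gives
\begin{equation*}
\Bigl(\int_0^r h(\rho)\, d\rho\Bigr)^q
\lesssim (1-r)^{(q-1)-\beta q}\int_0^r h(\rho)^q (1-\rho)^{\beta q}\, d\rho.
\end{equation*}
Multiply by $(1-r)^{-1-\mu q}$ and integrate in $r$; by Fubini the problem reduces to evaluating $\int_\rho^1 (1-r)^{-2-\mu q+q-\beta q}\, dr$. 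The condition $\beta>1-\mu-1/q$ ensures the exponent is strictly less than $-1$, so this inner integral is comparable to $(1-\rho)^{-1-\mu q+q-\beta q}$. Combining with the outer weight $(1-\rho)^{\beta q}$ cancels the $\beta$-dependence and yields $(1-\rho)^{-1-(\mu-1)q}$, which is exactly the weight on the right-hand side.

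The only subtle point is the choice of $\beta$: it must simultaneously make the Hölder-side integral blow up at the expected rate ($\beta q'>1$) and make the post-Fubini integral converge at $r=1$ ($\beta>1-\mu-1/q$). Both inequalities are easily compatible thanks to the hypothesis $\mu<0$, which is what forces $1-\mu-1/q>1/q'$ to sit in the admissible range. No other obstacle is anticipated; the remainder is bookkeeping with powers of $(1-r)$.
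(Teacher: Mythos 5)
Your overall strategy is fine: the paper itself does not prove this lemma (it simply cites Flett), and the weighted-H\"older-plus-Fubini argument you outline is a standard and legitimate way to establish it; your $q=1$ case is correct (indeed with equality, $K=1/(-\mu)$).

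However, in the case $q>1$ the stated condition on $\beta$ is backwards, and as written the step fails. After Fubini you must evaluate $\int_\rho^1\left(1-r\right)^{E}\,dr$ with $E=-2-\mu q+q-\beta q$; this integral is finite and comparable to $\left(1-\rho\right)^{E+1}$ precisely when $E>-1$, whereas for $E\le -1$ it is $+\infty$. Your claim that ``$\beta>1-\mu-1/q$ ensures the exponent is strictly less than $-1$, so this inner integral is comparable to $\left(1-\rho\right)^{E+1}$'' is therefore false, and choosing ``any $\beta$ sufficiently large'' destroys the proof: the post-Fubini integral diverges. The correct requirement is $E>-1$, i.e. $\beta<1-\mu-\tfrac{1}{q}$, which together with the H\"older requirement $\beta q'>1$, i.e. $\beta>1-\tfrac{1}{q}$, confines $\beta$ to the window $1-\tfrac{1}{q}<\beta<1-\tfrac{1}{q}-\mu$. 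This window is nonempty exactly because $\mu<0$ — this is where the hypothesis enters, as your closing remark about $1-\mu-1/q>1/q'$ in fact suggests you intended. With $\beta$ chosen in that window, your bookkeeping goes through: the inner integral contributes $\left(1-\rho\right)^{-1-\mu q+q-\beta q}$, the weight $\left(1-\rho\right)^{\beta q}$ cancels the $\beta$-dependence, and one obtains the weight $\left(1-\rho\right)^{-1-\left(\mu-1\right)q}$ as required. So the proof is repairable by reversing that one inequality, but as submitted the key convergence claim is wrong.
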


\begin{proof}[The case $\alpha=-1$:]
	The operator $D^{-1}:\AC\left(\XC\right)\to\AC\left(\XC\right)$ has the integral representation
	\begin{equation*}
	\left(D^{-1}f\right)\left(r\zeta_x\right)=\frac{1}{r}\int_0^r f\left(\rho\zeta_x\right)\, d\rho,
	\end{equation*}
	as is verified by term-wise integration of the Taylor series. Let $r_0=1-|I|$, and assume for simplicity that $I\ne \T$, which yields $\frac{1}{r_0}\le 2$. We then have
	\begin{equation*}
	\int_{r=r_0}^1 \left(1-r\right)^{-1-\left(s+1\right)q} \|\left(D^{-1}f\right)_r\left(\zeta_x\right)\|_\XC^q \, dr
	\lesssim 
	A + B,
	\end{equation*}
	where
	\begin{equation*}
	A = \int_{r=r_0}^1 \left(1-r\right)^{-1-\left(s+1\right)q}\left\{\int_{\rho = 0}^{r_0}  \|f_{\rho}\left(\zeta_x\right)\|_\XC\, d\rho \right\}^q \, dr,
	\end{equation*}
	and
	\begin{equation*}
	B = \int_{r=r_0}^1 \left(1-r\right)^{-1-\left(s+1\right)q}\left\{\int_{\rho = r_0}^{1}  \|f_{\rho}\left(\zeta_x\right)\|_\XC\, d\rho \right\}^q \, dr.
	\end{equation*}
	By Lemma \ref{lemma:RadialGrowth}, we trivially obtain the estimate
	\begin{equation*}
	A \lesssim  \|f|_{p,q}^{s,\tau}\|_3^q|I|^{\tau q+q}.
	\end{equation*}
	Applying Lemma \ref{lemma:HardyIneq}, with $h\left(\rho\right)=\1_{\left[r_0,1\right)}\left(\rho\right) \|f_\rho\left(\zeta_x\right)\|_\XC$ and $\mu = s+1$, we obtain
	\begin{equation*}
	B\lesssim \int_{r=r_0}^1\left(1-r\right)^{-1-sq} \|f_r\left(\zeta_x\right)\|_\XC^q\, dr.
	\end{equation*}
	These estimates together show that $D^{-1}:F_{p,q}^{s,\tau}\left(\D,\XC\right)\to F_{p,q}^{s+1,\tau}\left(\D,\XC\right)$ is bounded under the conditions in Theorem \ref{thm:Norms}. This concludes the proof.
\end{proof}

\bibliographystyle{amsplain}
\bibliography{thesisbib}

\end{document}